\begin{document}
\newtheorem{cor}{Corollary}[section]
\newtheorem{theorem}[cor]{Theorem}
\newtheorem{prop}[cor]{Proposition}
\newtheorem{lemma}[cor]{Lemma}
\newtheorem{sublemma}[cor]{Sublemma}
\newtheorem{stat}[cor]{Statement}
\theoremstyle{definition}
\newtheorem{defi}[cor]{Definition}
\theoremstyle{remark}
\newtheorem{remark}[cor]{Remark}
\newtheorem{example}[cor]{Example}
\newtheorem{question}[cor]{Question}

\newcommand{\cA}{{\mathcal A}}
\newcommand{\cD}{{\mathcal D}}
\newcommand{\cE}{{\mathcal E}}
\newcommand{\cF}{{\mathcal F}}
\newcommand{\cG}{{\mathcal G}}
\newcommand{\cM}{{\mathcal M}}
\newcommand{\cN}{{\mathcal N}}
\newcommand{\cQ}{{\mathcal Q}}
\newcommand{\cS}{{\mathcal S}}
\newcommand{\cT}{{\mathcal T}}
\newcommand{\cW}{{\mathcal W}}
\newcommand{\cCP}{{\mathcal C\mathcal P}}
\newcommand{\cML}{{\mathcal M\mathcal L}}
\newcommand{\cFML}{{\mathcal F\mathcal M\mathcal L}}
\newcommand{\cGH}{{\mathcal G\mathcal H}}
\newcommand{\cQF}{{\mathcal Q\mathcal F}}
\newcommand{\cMF}{{\mathcal M\mathcal F}}
\newcommand{\dwp}{d_{WP}}
\newcommand{\C}{{\mathbb C}}
\newcommand{\N}{{\mathbb N}}
\newcommand{\R}{{\mathbb R}}
\newcommand{\Z}{{\mathbb Z}}
\newcommand{\Kt}{\tilde{K}}
\newcommand{\Mt}{\tilde{M}}
\newcommand{\dr}{{\partial}}
\newcommand{\betab}{\overline{\beta}}
\newcommand{\kappab}{\overline{\kappa}}
\newcommand{\pib}{\overline{\pi}}
\newcommand{\taub}{\overline{\tau}}
\newcommand{\gb}{\overline{g}}
\newcommand{\hb}{\overline{h}}
\newcommand{\ub}{\overline{u}}
\newcommand{\Bb}{\overline{B}}
\newcommand{\Kb}{\overline{K}}
\newcommand{\Sigmab}{\overline{\Sigma}}
\newcommand{\gd}{\dot{g}}
\newcommand{\hd}{\dot{h}}
\newcommand{\Id}{\dot{I}}
\newcommand{\Jd}{\dot{J}}
\newcommand{\diff}{\mbox{Diff}}
\newcommand{\isom}{\mathrm{Isom}}
\newcommand{\dev}{\mbox{dev}}
\newcommand{\devb}{\overline{\mbox{dev}}}
\newcommand{\devt}{\tilde{\mbox{dev}}}
\newcommand{\vol}{\mbox{Vol}}
\newcommand{\hess}{\mathrm{Hess}}
\newcommand{\cb}{\overline{c}}
\newcommand{\db}{\overline{\partial}}
\newcommand{\hgr}{h_{gr}}
\newcommand{\Sigmat}{\tilde{\Sigma}}

\newcommand{\cunc}{{\mathcal C}^\infty_c}
\newcommand{\cun}{{\mathcal C}^\infty}
\newcommand{\dd}{d_D}
\newcommand{\dmin}{d_{\mathrm{min}}}
\newcommand{\dmax}{d_{\mathrm{max}}}
\newcommand{\Dom}{\mathrm{Dom}}
\newcommand{\dn}{d_\nabla}
\newcommand{\ded}{\delta_D}
\newcommand{\delmin}{\delta_{\mathrm{min}}}
\newcommand{\delmax}{\delta_{\mathrm{max}}}
\newcommand{\hmin}{H_{\mathrm{min}}}
\newcommand{\maxi}{\mathrm{max}}
\newcommand{\oL}{\overline{L}}
\newcommand{\oP}{{\overline{P}}}
\newcommand{\xb}{{\overline{x}}}
\newcommand{\yb}{{\overline{y}}}
\newcommand{\Ran}{\mathrm{Ran}}
\newcommand{\tgamma}{\tilde{\gamma}}
\newcommand{\cotan}{\mbox{cotan}}
\newcommand{\area}{\mbox{Area}}
\newcommand{\lambdat}{\tilde\lambda}
\newcommand{\xt}{\tilde x}
\newcommand{\Ct}{\tilde C}
\newcommand{\St}{\tilde S}

\newcommand{\sh}{\mathrm{sinh}\,}
\newcommand{\ch}{\mathrm{cosh}\,}
\newcommand{\tr}{\mathrm{tr}\,}
\newcommand{\re}{\mathrm{Re}}
\newcommand{\sch}{\mathrm{Sch}}
\newcommand{\ric}{\mathrm{Ric}}
\newcommand{\scal}{\mathrm{Scal}}
\newcommand{\ext}{\mathrm{ext}}

\newcommand{\II}{I\hspace{-0.1cm}I}
\newcommand{\III}{I\hspace{-0.1cm}I\hspace{-0.1cm}I}
\newcommand{\note}[1]{{\small {\color[rgb]{1,0,0} #1}}}

\title[The Schwarzian tensor and measured foliations]{Notes on the Schwarzian tensor and measured foliations at infinity of quasifuchsian manifolds}

\author{Jean-Marc Schlenker}
\address{University of Luxembourg,
Department of mathematics, 
University of Luxembourg, 
Maison du nombre, 6 avenue de la Fonte,
L-4364 Esch-sur-Alzette, Luxembourg
}
\email{jean-marc.schlenker@uni.lu}

\date{v1, \today}

\begin{abstract}
  The boundary at infinity of a quasifuchsian hyperbolic manifold is equiped with a holomorphic quadratic differential.   Its horizontal measured foliation $f$ can be interpreted as the natural analog of the measured bending lamination on the boundary of the convex core. This analogy leads to a number of questions. We provide a variation formula for the renormalized volume in terms of the extremal length $\ext(f)$ of $f$, and an upper bound on $\ext(f)$. 

  We then describe two extensions of the holomorphic quadratic differential at infinity, both valid  in higher dimensions. One is in terms of Poincaré-Einstein metrics, the other (specifically for conformally flat structures) of the second fundamental form of a hypersurface in a ``constant curvature'' space with a degenerate metric, interpreted as the space of horospheres in hyperbolic space. This clarifies a relation between linear Weingarten surfaces in hyperbolic manifolds and Monge-Amp\`ere equations.
\end{abstract}

\maketitle

\tableofcontents

\section{Introduction}

\subsection{The measured foliation at infinity} \label{ssc:measured}

Consider a quasifuchsian manifold $M$ homeomorphic to $S\times \R$, where
$S$ is a closed oriented surface of genus at least $2$. We call $\cT_S$ the Teichm\"uller
space of $S$, $\cML_S$ the space of measured laminations on $S$, and $\cQ_S$ the
space of holomorphic quadratic differential on $S$, which can be considered
as a bundle over $\cT_s$ with fibre $\cQ_c$ over $c\in \cT_S$. We denote by 
$\cCP_S$ the space of complex projective structures on $S$, which can through the
Schwarzian derivative be considered as an affine bundle over $\cT_S$ with fiber
$\cQ_c$ over $c\in \cT_S$ (see \S \ref{ssc:schwarzian}).

We also denote by $\cT_{\partial M}, \cML_{\partial M}$, etc, the corresponding notions
but on $\partial M$ rather than on $S$. If $M$ is a quasifuchsian manifold
homeomorphic to $S\times \R$ then $\partial M$ is the disjoint union of two copies of $S$,
which we denote by $\partial_-M$ and $\partial_+M$, one with the opposite orientation.

Recall that the boundary at infinity of $M$, $\partial_\infty M$, can be identified
with the quotient by the action of $\pi_1(M)=\pi_1(S)$ of the domain of discontinuity
of $M$:
$$ \partial_\infty M = \Omega_\rho/\rho(\pi_1(S)) = (\partial_\infty H^3\setminus 
\Lambda_\rho)/\rho(\pi_1(S))~. $$
Here $\rho:\pi_1(S)\to \isom(H^3)$ is the holonomy representation of $M$, 
and $\Lambda_\rho\subset \partial_\infty H^3$ is its limit set.

Since $\rho$ acts on $\partial_\infty H^3$ by complex projective transformations, $\partial_\infty M$ is endowed with a $\C P^1$-structure $\sigma\in \cCP_{\partial M}$. Denote by $c\in \cT_{\partial M}$ the underlying complex structure, and by $\sigma_F$ the complex projective structure obtained by applying to $(\partial M,c)$ the Uniformization Theorem. The Schwarzian derivative of the holomorphic map isotopic to the identity between $(\partial M,\sigma_F)$ and $(\partial M, \sigma)$ is a holomorphic quadratic differential $-q\in \cQ_c$ (see \S \ref{ssc:q}).

We will consider a naturally defined measured foliation $f$ at infinity on $\partial_\infty M$.
In the point of view developed here, $f$ is an analog at infinity of the measured
bending lamination on the boundary of the convex core $C(M)$ of $M$.

\begin{defi}
The {\em foliation at infinity} of $M$, denoted by $f\in \cMF$, is the horizontal
foliation of the holomorphic quadratic differential $q$ of $M$. 
\end{defi}

\subsection{A variational formula for the renormalized volume}

We consider here the renormalized volume of quasifuchsian hyperbolic manifolds, see \S \ref{ssc:renormalized}. There is a simple variational formula for the renormalized volume, in terms of $q$ and of the variation of the conformal structure at infinity, Equation \eqref{eq:schlafli} below. Here we write this variational formula in another way, involving the measured foliation at infinity. 

\begin{theorem} \label{tm:schlafli}
  \label{tm:f}
  In a first-order variation of $M$, we have
  \begin{equation}
    \label{eq:f}
     \dot V_R = -\frac 12 (d\ext(f))(\dot c)~. 
  \end{equation}
\end{theorem}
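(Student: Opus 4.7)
The plan is to combine two classical variational formulas. First, the Schläfli-type formula \eqref{eq:schlafli} referred to in the paragraph above the theorem expresses $\dot V_R$ as a linear functional of the first-order variation of the conformal structure at infinity, with coefficient given by the holomorphic quadratic differential $q$ of $M$; schematically,
\[
\dot V_R \;=\; -\tfrac{1}{4}\,\mathrm{Re}\!\int_{\partial M} q\,\dot\mu,
\]
where $\dot\mu$ is any Beltrami differential representing $\dot c$.

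Second, by definition $f$ is the horizontal measured foliation of $q$, so by the Hubbard-Masur theorem $q$ is the unique holomorphic quadratic differential on $(\partial M, c)$ whose horizontal foliation is $f$. Gardiner's variational formula for the extremal length then asserts that, with $f\in \cMF$ held fixed and $c\in \cT_{\partial M}$ varying,
\[
d\ext(f)(\dot c) \;=\; 2\,\mathrm{Re}\!\int_{\partial M} q\,\dot\mu.
\]
Substituting this equality into the Schläfli formula immediately yields \eqref{eq:f}.

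The substance of the theorem is thus already contained in these two well-known formulas, and the only real work is bookkeeping: one must verify that the constants, signs, and pairings used in \eqref{eq:schlafli} and in Gardiner's formula (in the normalization of the original Gardiner/Gardiner-Masur references) combine to produce precisely the factor $-1/2$ appearing in \eqref{eq:f}. Because $\partial M = \partial_- M \sqcup \partial_+ M$ and both $q$ and $f$ decompose as sums of their restrictions to the two components, this matching can be performed componentwise and introduces no further difficulty.

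The main conceptual point, rather than any technical obstacle, is to interpret the right-hand side correctly. Here $d\ext(f)(\dot c)$ denotes the differential at $c$ of the function $c'\mapsto \ext_{c'}(f)$ with $f$ frozen at its initial value, so the simultaneous variation of $f$ as $M$ varies does not enter the formula. Once this is understood and the normalizations are pinned down, the proof is a one-line substitution, which is precisely the appeal of phrasing the Schläfli formula in terms of the foliation at infinity.
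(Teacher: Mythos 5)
Your argument is correct and is essentially the paper's: combine the variational formula \eqref{eq:schlafli} with Gardiner's formula for the derivative of extremal length, using that $q$ is precisely the Hubbard--Masur differential of $f$ at $c$ (since $f$ is by definition the horizontal foliation of $q$). The only difference is one of packaging: rather than citing the classical extremal-length form of Gardiner's formula, the paper reaches it through Wolf's theorem (Theorem \ref{tm:wolf}), which identifies $2\ext_c(f)$ with the energy $E_f(c)$ of the equivariant harmonic map to the dual tree of $f$ and identifies $q$ with $-\Phi_f$, and then applies the Gardiner-type energy formula \eqref{eq:gardiner}; the composite of these two steps is exactly your $d\ext(f)(\dot c)=2\re\int q\,\dot\mu$, so nothing of substance changes. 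One bookkeeping point that you defer does need attention: in the paper's normalization, \eqref{eq:schlafli} reads $\dot V_R=-\re(\langle q,\dot c\rangle)$ with \emph{no} factor $\tfrac14$ (the $\tfrac14$ from the formula $\dot V_R=-\tfrac14\int\langle \II^*_0,\dot I^*\rangle\,da_{I^*}$ is cancelled by the factor $4$ in Lemma \ref{lm:coef4}), so your displayed coefficient $-\tfrac14$ would yield $-\tfrac18\,d\ext(f)$ rather than $-\tfrac12\,d\ext(f)$; with the correct constant the one-line substitution closes exactly as you describe.
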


Here $\ext(f)$ is the extremal length of $f$, considered as a function over the Teichm\"uller space of the boundary $\cT_{\partial M}$. The right-hand side is the differential of this function, evaluated on the first-order variation of the complex structure on the boundary.

Equation \eqref{eq:f} is remarkably similar to the dual Bonahon-Schl\"afli formula. The dual volume of the convex core of $M$ is defined as
$$ V^*_C(M) = V_C(M) - \frac 12 L_m(l)~, $$
where $m$ and $l$ are the induced metric and measured bending lamination on the boundary of the convex core of $M$. The dual Bonahon-Schl\"afli formula is then:
$$ \dot V^*_C = -\frac 12 (dL(l))(\dot m)~. $$

This statement, taken from \cite{cp}, is a consequence of the Bonahon-Schl\"afli formula, which is a variational formula for the (non-dual) volume of the convex core of $M$, see \cite{bonahon-variations,bonahon}.

\subsection{From the boundary of the convex core to the boundary at infinity}

Theorem \ref{tm:schlafli}, and its analogy to the dual Bonahon-Schl\"afli formula, suggests an analogy between the properties of quasifuchsian manifolds considered from the boundary of the convex core and from the boundary at infinity. For instance, on the  boundary of the convex core, we have the following upper bound on the length of the bending lamination, see \cite[Theorem 2.16]{bridgeman-brock-bromberg}.

\begin{theorem}[Bridgeman, Brock, Bromberg]
$L_{m_\pm}(l_\pm)\leq 6\pi|\chi(S)|$.
\end{theorem}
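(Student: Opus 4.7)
The plan is to use the pleated-surface structure of $\partial_\pm C(M)$ together with an area estimate on equidistant surfaces outside the convex core. The two sides are handled identically, so I focus on $\partial_+ C(M)$; the corresponding component of $\partial_\infty M$ is denoted $\partial_\infty^+ M$.

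First, I would reduce to the case where $l_+ = \sum_i \theta_i \gamma_i$ is a weighted simple multicurve, by density in $\cML$ and continuity of length. Thurston's constraint on pleating angles of the convex core boundary gives $\theta_i < \pi$.

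Second, for $\epsilon > 0$, the equidistant surface $S_\epsilon$ at distance $\epsilon$ outside the convex core is a $C^{1,1}$ convex surface. It decomposes into ``flat parts'' (equidistant to the totally geodesic pieces of $\partial_+ C(M)$, with induced metric $\cosh^2(\epsilon)\cdot m_+$) and ``ridge parts'' (partial tubes of angular width $\theta_i$ around each bending leaf $\gamma_i$, with induced metric $\cosh^2(\epsilon)\, dt^2 + \sinh^2(\epsilon)\, d\phi^2$). Using $\area(\partial_+ C(M), m_+) = 2\pi|\chi(S)|$ by Gauss--Bonnet and summing over pieces yields
\begin{equation*}
  \area(S_\epsilon) = 2\pi|\chi(S)|\cosh^2(\epsilon) + L_{m_+}(l_+)\sinh(\epsilon)\cosh(\epsilon).
\end{equation*}

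Third, dividing by $\cosh^2(\epsilon)$ and letting $\epsilon \to \infty$, the rescaled induced metrics on $S_\epsilon$ converge to a conformal metric on $\partial_\infty^+ M$, which is essentially the Kulkarni--Pinkall (or Thurston projective) canonical metric. A bound of the form $\area(\text{canonical metric}) \leq 8\pi|\chi(S)|$ --- derivable from a Nehari-type estimate on the Schwarzian $q_+$ at infinity --- then gives
\begin{equation*}
  2\pi|\chi(S)| + L_{m_+}(l_+) \leq 8\pi|\chi(S)|,
\end{equation*}
so $L_{m_+}(l_+) \leq 6\pi|\chi(S)|$. A continuity argument extends the bound from weighted multicurves to general $l_+ \in \cML$.

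The main obstacle is obtaining the sharp upper bound $8\pi|\chi(S)|$ on the asymptotic area: this is precisely the input that fixes the final constant $6\pi$, and it requires a global analytic estimate on the Schwarzian at infinity rather than purely local pleated-surface geometry. Weaker asymptotic bounds would still yield a universal inequality of the form $L_{m_\pm}(l_\pm) \leq C|\chi(S)|$, but with a worse constant $C > 6\pi$.
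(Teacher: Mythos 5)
First, note that the paper does not prove this statement at all: it is quoted directly from Bridgeman--Brock--Bromberg, so there is no internal proof to compare against, and your attempt has to stand on its own.

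Your reduction is genuinely the right one, and the first three steps are essentially correct: the area of the equidistant surface $S_\epsilon$ is $2\pi|\chi(S)|\cosh^2(\epsilon)+L_{m_+}(l_+)\sinh(\epsilon)\cosh(\epsilon)$ for a finite-leaved pleating locus, the rescaled metrics $\cosh^{-2}(\epsilon)I_\epsilon$ converge to the Kulkarni--Pinkall/Thurston metric $\tau$ on $\partial_\infty^+M$, and hence $\area(\tau)=2\pi|\chi(S)|+L_{m_+}(l_+)$ (this is the grafting area identity). But this means the inequality $\area(\tau)\le 8\pi|\chi(S)|$ is not an auxiliary estimate --- it is \emph{exactly equivalent} to the theorem you are trying to prove, and you leave it unproved, acknowledging it as ``the main obstacle.'' So the proof has a genuine gap at its only quantitative step. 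Moreover, the route you gesture at (``a Nehari-type estimate on the Schwarzian $q_+$'') is not how this bound is obtained and it is not clear it can be: Nehari controls the Schwarzian of the uniformizing map, not the ratio of the Thurston metric to the Poincar\'e metric. The standard input is the pointwise comparison $\lambda_{hyp}\le\tau\le 2\lambda_{hyp}$ for the M\"obius-invariant (maximal round disk) metric on a \emph{simply connected} hyperbolic domain, due to Kulkarni--Pinkall and Herron--Ma--Minda, which immediately gives $\area(\tau)\le 4\area(\lambda_{hyp})=8\pi|\chi(S)|$. A naive Koebe quarter-theorem argument with disks centered at the point only yields $\tau\le 4\lambda_{hyp}$, hence $L_{m_\pm}(l_\pm)\le 30\pi|\chi(S)|$; getting the factor $2$, and thus the constant $6\pi$, requires the finer analysis of maximal inscribed round disks. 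To complete the argument you would need to prove (or properly invoke) that comparison, and also say a word about why simple connectivity of the components of the domain of discontinuity is what makes it applicable.

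For contrast, the analogous bound the paper does prove, Theorem \ref{tm:ext} on $\ext_{c_\pm}(f_\pm)$, really is a one-line consequence of Nehari's theorem, since there the quantity being bounded is $\int|q|$ against the hyperbolic area form; that may be what suggested the Schwarzian route to you, but it does not transfer to the bending length, where the relevant comparison is between two conformal metrics rather than a bound on a quadratic differential.
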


Similarly, on the boundary at infinity, we have the following result, proved in \S \ref{ssc:ext}.

\begin{theorem} \label{tm:ext}
$\ext_{c_\pm}(f_\pm)\leq 3\pi|\chi(S)|$.
\end{theorem}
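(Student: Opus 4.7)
The plan is to combine two classical facts: Kerckhoff's identification of the extremal length of the horizontal foliation of a holomorphic quadratic differential with the total area of its associated flat structure, together with the Kraus--Nehari upper bound on the Schwarzian derivative of a univalent function.

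First, I would invoke the formula (Kerckhoff, extending Hubbard--Masur) that for any holomorphic quadratic differential $q$ on a Riemann surface $(X,c)$ the extremal length of its horizontal measured foliation $f_q$ equals the $L^1$-norm
$$ \ext(f_q) = \int_X |q|~. $$
Applied to $q_\pm \in \cQ_{c_\pm}$ on each boundary component, this gives $\ext_{c_\pm}(f_\pm) = \int_{\partial_\pm M} |q_\pm|$.

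Next, by definition $-q_\pm$ is the Schwarzian derivative of the holomorphic map $(\partial_\pm M, \sigma_F) \to (\partial_\pm M, \sigma)$ isotopic to the identity. Lifting to universal covers, this map becomes a holomorphic bijection from the unit disk (the universal cover of the Fuchsian uniformization) onto the connected component of $\Omega_\rho \subset \C P^1$ covering $\partial_\pm M$. Since quasifuchsian groups have two simply connected invariant components in the domain of discontinuity, this lift is univalent, and the classical Kraus--Nehari theorem yields
$$ |q_\pm(z)|(1-|z|^2)^2 \leq 6 $$
on the unit disk. Equivalently, if $\lambda_\pm^2$ denotes the area form of the hyperbolic metric of curvature $-1$ on $(\partial_\pm M, c_\pm)$, then the pointwise ratio satisfies $|q_\pm|/\lambda_\pm^2 \leq 3/2$.

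Combining these estimates with Gauss--Bonnet,
$$ \ext_{c_\pm}(f_\pm) = \int_{\partial_\pm M} |q_\pm| \leq \frac{3}{2}\, \area_{\mathrm{hyp}}(\partial_\pm M) = \frac{3}{2} \cdot 2\pi|\chi(S)| = 3\pi|\chi(S)|~. $$
The only real point requiring care is bookkeeping around normalizations: one has to check that the paper's sign and normalization conventions for the Schwarzian produce a quadratic differential whose hyperbolic pointwise norm is indeed bounded by $3/2$ (the sign of $q_\pm$ is irrelevant since only $|q_\pm|$ enters the estimate). No deep geometric input beyond univalence of the Riemann map onto a component of $\Omega_\rho$ is needed.
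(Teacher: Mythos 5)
Your argument is exactly the paper's: identify $\ext_{c_\pm}(f_\pm)$ with $\int_{\partial_\pm M}|q_\pm|$ (which is the paper's definition of extremal length, via $Q=-\Phi_f$), apply the Kraus--Nehari bound $|q_\pm|\leq \tfrac32\, da_{h_\pm}$ to the univalent Riemann map onto a component of $\Omega_\rho$, and finish with Gauss--Bonnet. The normalization check you flag works out as you expect, so the proof is correct and essentially identical to the one in the paper.
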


\begin{table}[ht]
  \centering
  \begin{tabular}{|c|c|}
    \hline
    On the convex core & At infinity \\
    \hline\hline
    Induced metric $m$ & Conformal structure at infinity $c$ \\
    \hline
    Thurston's conjecture on prescribing $m$ & Bers' Simultaneous Uniformization Theorem \\
    \hline 
    Measured bending lamination $l$ & measured foliation $f$ \\
    \hline
    Hyperbolic length of $l$ for $m$ & Extremal length of $f$ for $c$ \\
    \hline
    Volume of the convex core $V_C$ & Renormalized volume $V_R$ \\
    \hline
    Dual Bonahon-Schl\"afli formula & Theorem \ref{tm:schlafli} \\
    $\dot V^*_C = -\frac 12 (dL(l))(\dot m)$ & $\dot V_R = -\frac 12 (d\ext(f))(\dot c)$ \\
    \hline
    Bound on $L_m(l)$ \cite{bridgeman,bridgeman-brock-bromberg} & Theorem \ref{tm:ext} \\
    $L_{m_\pm}(l_\pm)\leq 6\pi|\chi(S)|$ & $\ext_{c_\pm}(f_\pm)\leq 3\pi|\chi(S)|$ \\
    \hline
    Brock's upper bound on $V_C$ \cite{brock:2003} & Upper bound on $V_R$ \cite{compare_v4} \\
    \hline
  \end{tabular}
  \caption{Infinity vs the boundary of the convex core}
  \label{tb:1}
\end{table}


This analogy, briefly described in Table \ref{tb:1}, suggests a number of questions (see \S  \ref{ssc:questions}) since it could be expected that, at least up to some point, phenomena known to hold on the boundary of the convex core might hold also on the boundary at infinity, and conversely.

Another series of questions arises from comparing the data on the boundary of the convex core to the corresponding data on the boundary at infinity. For instance, it is well known that $m$ is uniformly quasi-conformal to $c$ (see \cite{epstein-marden,epstein-markovic}), and one can ask whether similar statements hold for other quantities. We do not expand on those questions here. 

\subsection{Surfaces associated to metrics at infinity}

We now consider another point of view on the Schwarzian derivative at infinity.

Let $\Omega\subset\partial_\infty H^3$ be an open domain, and 
let $h$ be a Riemannian metric on $\Omega$ compatible with the
conformal structure of $\partial_\infty H^3$. We can associate to
$h$ two distinct but related surfaces, each immersed in a 3-dimensional manifold.
\begin{enumerate}
\item C. Epstein \cite{Eps,epstein:reflections} 
defined from $h$ a (non-smooth) surface $S_h\subset H^3$,
which can be defined as the envelope of a family of horospheres associated
to $h$ at each point of $\Omega$. 
\item One can associate to $h$ a smooth surface $S^*_h$ in a the space of
horospheres of $H^3$, see \cite{horo}. 
This surface $S^*_h$ is dual (see $\S$\ref{ssc:c3+}) to the Epstein
surface $S_h$. The space of horospheres, denoted by $C^3_+$ below, 
has a degenerate metric but a rich geometric structure, and $S^*_h$ is equipped with an
induced metric, $I^*_c$, and a ``second fundamental form'', $\II^*_c$. They satisfy 
the Codazzi equation, $d^{D^*}\II_c^*=0$, and a modified form of the Gauss equation, 
$\tr_{I^*_c}\II^*_c=1-K_{I^*}$. There is a natural embedding $\phi_h$ of $\Omega$ in $C^3_+$
with image $S^*_h$. The pull-back $\phi_h^* I^*_c$ is equal to $h$, while 
$\phi_h^* \II^*_c$ is a bilinear symmetric tensor field on $\Omega$ naturally associated
to $h=I^*_c$.
\end{enumerate}

The second geometric data, given by $I^*_c$ and $\II^*_c$, is perhaps less
obvious than the first. However it is also quite natural and, as we will see below, 
it is an efficient tool in relating (1) to (3), (4) and (4') below.

\subsection{Geometric structures on a hyperbolic end}

Consider now a hyperbolic end $E$, for instance an end of a quasifuchsian or
convex co-compact hyperbolic 3-manifold (the notion of hyperbolic end is 
recalled in $\S$\ref{ssc:ends}). We are interested here in
three geometric structures that occur quite naturally on the boundary at infinity 
$\partial_\infty E$ of $E$. They are related to (1) and (2) above when $\Omega$ is
the universal cover of the boundary at infinity $\partial_\infty E$ and $h$ is
invariant under the action of $\pi_1E$ on $\Omega$.
\begin{enumerate}\setcounter{enumi}{2}
\item Extending to hyperbolic ends the construction made in \S \ref{ssc:measured}, $\partial_\infty E$ is equiped with a complex structure $c$, with a complex projective structure $\sigma$, and with a holomorphic quadratic differential $q$, defined as the Schwarzian derivative of the holomorphic map isotopic to the identity between $(\partial_\infty E, \sigma)$ and $(\partial_\infty E, \sigma_F)$, where $\sigma _F$ is the Fuchsian complex projective structure associated to $c$.
\item Given any metric $I^*$ in the conformal class at infinity of $\partial_\infty E$,
there is a section $\II^*$ if the bundle of bilinear symmetric forms on $T\partial_\infty E$.
In \cite{volume}, $I^*$ and $\II^*$ are defined in terms of equidistant foliations 
of a neighborhood of infinity in $E$: given $I^*$, there is a unique foliation such
that the hyperbolic metric can be written, in a neighborhood of infinity, as
\begin{equation}
  \label{eq:I*-II*}
  dr^2+\frac 12(e^{2r}I^*+2\II^*+e^{-2r}\III^*)~.
\end{equation}
$I^*$ and $\II^*$ are called the induced metric and second fundamental
form at infinity of $E$, since they satisfy the Codazzi equation, $d^{D^*}\II^*=0$, and 
a modified version of the Gauss equation for surfaces in 3-dimensional space-forms:
$\tr_{I^*}\II^*=-K_{I^*}$. 
$I^*$ and $\II^*$ completely characterize
$E$. 
\item[(4')] The hyperbolic metric on $E$ can be written as
  \begin{equation}
    \label{eq:devel}
    \frac{dx^2 + h_x}{x^2}~,
  \end{equation}
where $(h_x)_{x\in (0,\epsilon)}$ is a one-parameter family of metrics on $\partial_\infty E$. Moreover
$h_x$ can be written as 
\begin{equation}
  \label{eq:devel2}
  h_x=h_0+h_2 x^2 + h_4 x^4~.
\end{equation}
The metric $h_0$ is always in the conformal class on $\partial_\infty E$ determined
by the complex structure $c$. Conversely, any such metric $h_0$ is obtained in
a unique way. The bilinear form $h_4$ depends on $h_0$ and $h_2$ in a simple way
(see $\S$\ref{ssc:pe}), so the geometry of $E$ is encoded solely in $h_0$ and 
$h_2$.
\end{enumerate}

There are some well-known relations between the geometric structures above. 
First, (4) and (4') are related in a particularly simple way. Given $h_0$, it
defines a unique equidistant foliation near infinity such that \eqref{eq:devel}
and \eqref{eq:devel2} hold. If both $(h_0,h_2)$ and $(I^*, \II^*)$ are determined
by the same equidistant foliation, they are related by:

\begin{prop} \label{pr:Ih}
$I^*=2h_0$, $\II^*=h_2$.
\end{prop}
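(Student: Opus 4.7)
The plan is to prove Proposition \ref{pr:Ih} by a direct change of variables from the coordinate $r$ in \eqref{eq:I*-II*} to $x = e^{-r}$ in \eqref{eq:devel}--\eqref{eq:devel2}, and then matching the coefficients of the two expansions of the hyperbolic metric term by term.

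First, I would observe that since both coordinates are adapted to the same equidistant foliation near infinity, the level sets $\{r = \text{const}\}$ and $\{x = \text{const}\}$ coincide and $x$ is a function of $r$ alone. Matching the orthogonal parts of the two metric expressions, $dr^2 = dx^2/x^2$, forces $(dx/dr)^2 = x^2$, and the sign convention ($x \to 0^+$ as $r \to +\infty$) gives $x = C e^{-r}$. Taking $C = 1$ is the cleanest choice; indeed, any other constant $C$ would rescale $h_0$ by a factor $C^2$, while the corresponding freedom of shifting $r$ by an additive constant on the other side rescales $I^*$ by the same factor, so the relation $I^* = 2 h_0$ is invariant under this ambiguity and it suffices to verify it in one compatible normalization.

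With $x = e^{-r}$ fixed, the next step is a direct substitution: plugging $e^{2r} = x^{-2}$, $e^{-2r} = x^2$, and $dr^2 = dx^2/x^2$ into \eqref{eq:I*-II*} yields
\begin{equation*}
\frac{1}{x^2}\Bigl( dx^2 + \tfrac{1}{2} I^* + x^2\, \II^* + \tfrac{1}{2} x^4\, \III^* \Bigr),
\end{equation*}
which is precisely of the Fefferman--Graham form \eqref{eq:devel}--\eqref{eq:devel2} with $h_0 = \tfrac12 I^*$, $h_2 = \II^*$, and (as a byproduct) $h_4 = \tfrac12 \III^*$. By the uniqueness of the FG expansion given a conformal representative $h_0$ on the boundary, which is part of the content of (4'), these identifications are forced; the first two give the statement of the proposition.

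The argument is essentially elementary once the right change of variables is identified. The only point requiring care is the bookkeeping of conventions: one must check that the canonical normalization entering the definition of $I^*$ in \cite{volume} and the canonical normalization entering the FG construction are compatible, i.e.\ correspond to $C = 1$ rather than to some other constant. As explained above, this is coordinate-invariant content and can be verified in any convenient reference example, for instance the foliation of $H^3$ by horospheres centered at a fixed point of $\partial_\infty H^3$, where both $I^*$ and $h_0$ can be computed directly. No deeper geometric input is required.
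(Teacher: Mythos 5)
Your proof is correct and follows exactly the route the paper intends: the paper's own justification is the one-line remark that the proposition is a direct consequence of the definitions, and your substitution $x=e^{-r}$ into \eqref{eq:I*-II*} simply writes out that coefficient matching, with the added (and welcome) care about the normalization constant in $x=Ce^{-r}$. Nothing further is needed.
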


The proof is a direct consequence of the definition of $h_0$ and $h_2$  
in \eqref{eq:devel} and \eqref{eq:devel2} and of $I^*$ and $\II^*$ in \eqref{eq:I*-II*}.

The geometric quantities  (1)--(4')  defined above 
extend, to various extents, in higher dimension. In particular:
\begin{itemize}
\item (1) and (2) extend to higher dimensions, with $\Omega\subset\partial_\infty
H^{d+1}$, for $d\geq 2$. 
\item (3) extends (in a way) to the situation where $\partial_\infty E$ is replaced
by any conformally flat metric, for instance a Riemannian metric in the conformal
class at infinity of a hyperbolic end in dimension $d+1$. The Schwarzian derivative 
is then replaced by the Schwarzian tensor, defined in $\S$\ref{ssc:schwarzian-tensor}.
\item (4) extends to hyperbolic ends in higher dimension.
\item (4') extends to the setting where $E$ is replaced by an end of a 
Poincar\'e-Einstein manifold, as recalled in $\S$\ref{ssc:pe}.
\end{itemize}

\subsection{Main relations}

We will show that the geometric structures (1)-(4') above are strongly
related, in particular when $h_0$ is hyperbolic. 
We also intend to clarify the notions of ``induced metric'' and ``second
fundamental forms'' at infinity, denoted by $I^*$ and $\II^*$ here, and the
corresponding notions for surfaces in $C^3_+$, denoted by $I^*_c$ and 
$\II^*_c$ here. (The index $c$ is not present in \cite{horo} but
is introduced here to limit ambiguities.) Those relations lead to a 
simple conformal transformation rule for $\II^*$, Theorem \ref{tm:h2}
and Corollary \ref{cr:h2} below,
which in turn provides a potentially useful relation between special
surfaces in $H^3$ and Monge-Amp\`ere equations on surfaces.

We now consider a hyperbolic end $E$, along with a metric $I^*$ in the 
conformal class at infinity. This metric $I^*$ determines an equidistant
foliation of $E$ near infinity by surfaces $(S_t)_{t\geq t_0}$, which in
turns determines a metric $h_0$ and a field of bilinear symmetric forms
$h_2$ on $\partial_\infty E$.

The following can be found e.g. in \cite[Lemma 8.3]{volume}, but we will provide
here a much simpler proof.

\begin{theorem} \label{tm:12}
Suppose that $I^*$ is the hyperbolic metric in the conformal class on $\partial_\infty E$. Then the traceless part $\II^*_{0}$ of $\II^*$ is equal to $\II^*_0=\re(q)$.
\end{theorem}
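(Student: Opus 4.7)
The plan is to reduce the claim to an explicit local computation on the universal cover, relying on the conformal transformation rule for $\II^*$ provided by Theorem \ref{tm:h2} and Corollary \ref{cr:h2}.

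First I would lift everything to the universal cover and work in a conformal chart $z$ on $\widetilde{\partial_\infty E}$ in which $h_0 = y^{-2}|dz|^2$ is the standard hyperbolic metric on $H^2 = \{\Im z > 0\}$. In this chart the developing map of the complex projective structure $\sigma$ is a locally biholomorphic map $u\colon H^2 \to \C P^1$; by the definition of $q$ in \S\ref{ssc:q}, the pull-back of $q$ is (up to a universal sign) $-Su\,dz^2$, where $Su$ is the Schwarzian derivative of $u$, and $q$ vanishes precisely in the Fuchsian case where $u$ is M\"obius.

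Second, I would embed a neighborhood of infinity of the universal cover of $E$ into the upper half-space model of $H^3$ via $w = u(z)$, so that $g = t^{-2}(|dw|^2 + dt^2)$. In the naive Fefferman--Graham gauge $r = t$ one reads off $h_r = |u'(z)|^2 |dz|^2$, constant in $r$, so $\II^* = h_2 = 0$ in that gauge; however the corresponding boundary metric is the flat $h_0^{\mathrm{flat}} = |u'|^2 |dz|^2$ rather than the hyperbolic $h_0$. Passing to the FG gauge adapted to $h_0$ is a conformal rescaling $h_0^{\mathrm{flat}} \mapsto e^{2\omega} h_0^{\mathrm{flat}} = h_0$ with $\omega = -\log|u'| - \log y$, and Theorem \ref{tm:h2}/Corollary \ref{cr:h2} applied with starting value $\II^* = 0$ expresses the new $\II^*$ as an explicit symmetric tensor built from $d\omega$ and $\nabla^2\omega$, whose pure-trace part is calibrated by the Fuchsian case.

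Third, this is a direct computation. Using $\partial_z \log|u'| = \tfrac{1}{2}\, u''/u'$, $\partial_z\log y = (z-\bar z)^{-1}$, and the Schwarzian identity $u'''/u' = Su + \tfrac{3}{2}(u''/u')^2$, the non-holomorphic contributions involving $u''/u'$ and $(z-\bar z)^{-1}$ cancel miraculously in the $(2,0)$-component, leaving $-\tfrac12\,Su\,dz^2$; the pure-trace contribution reassembles into $\tfrac12 h_0 = \tfrac14 I^*$, consistent with the Gauss equation $\tr_{I^*}\II^* = -K_{I^*} = \tfrac12$. The traceless part of $\II^*$ is therefore $-\re(Su\,dz^2) = \re(q)$, which is the claim. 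The main obstacle I expect is the careful sign and normalization bookkeeping between the Schwarzian of the developing map and the quadratic differential $q$; the underlying algebra is otherwise routine, and the hyperbolicity of $h_0$ enters only to guarantee the cancellation of the $(z-\bar z)^{-1}$ terms and to fix the pure-trace contribution via the Liouville equation $\Delta_{h_0}(-\log y) = 1$.
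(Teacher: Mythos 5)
Your proposal is correct, and its logical skeleton is the same as the paper's: anchor the computation at a reference metric whose second fundamental form at infinity has vanishing traceless part, transport to the hyperbolic metric via the conformal transformation rule of Theorem \ref{tm:h2}/Corollary \ref{cr:h2}, and identify the resulting Schwarzian tensor with $\re(q)$ through \eqref{eq:schwarzian}. The differences are in the anchor and the bookkeeping. The paper anchors at the \emph{spherical} metric: its dual surface in $C^3_+$ is totally geodesic, so $\II^*_{c}=0$ and hence $\II^*_0=0$ by Theorem \ref{tm:24}; it then disposes of the identification step entirely by the cocycle property \eqref{eq:schwarzian2} together with the vanishing of $B(|dz|^2,h_S)$ and $B(h_D,|dz|^2)$, never writing a coordinate formula. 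You anchor at the \emph{flat} metric pulled back by the developing map, where $h_x\equiv|u'|^2|dz|^2$ in the half-space model gives $h_2=0$ directly; this is more elementary and avoids invoking Theorem \ref{tm:24} at this stage (though not in the proof of Theorem \ref{tm:h2} itself for $d=2$, so nothing is really saved). The cancellation you promise in the $(2,0)$-component does occur, but be aware that the Hessian in Corollary \ref{cr:h2} is taken with respect to $|u'|^2|dz|^2$, not $|dz|^2$: the Christoffel contribution $-2\,\partial_z\log|u'|\cdot\partial_z\omega$ must be included for the $u''/u'$ and $(z-\bar z)^{-1}$ terms to cancel, and it is cleaner to replace the whole computation by the cocycle property as the paper does. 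Finally, your trace bookkeeping ($\tr_{I^*}\II^*=\tfrac12$, pure-trace part $\tfrac14 I^*$) corresponds to taking $I^*=2h_0$ of curvature $-\tfrac12$ rather than the curvature $-1$ metric named in the statement, for which $\tr_{I^*}\II^*=-K_{I^*}=1$; this normalization slip is immaterial for the traceless part, which is all the theorem asserts.
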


The proof can be found in $\S$\ref{ssc:12}.

Together with Proposition \ref{pr:Ih}, we obtain the following direct consequence.

\begin{cor} \label{cor:13}
Suppose again that $I^*$ is the hyperbolic metric at infinity of $E$. Then 
$2h_0=I^*$, while $h_{2,0}=\re(q)$.
\end{cor}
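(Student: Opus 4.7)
The plan is to treat Corollary \ref{cor:13} as a direct combination of Proposition \ref{pr:Ih} and Theorem \ref{tm:12}, with no further geometric input required. Both claims in the statement are actually embedded in these two results; the only task is to make sure the identifications match and that trace-free decompositions are compatible.

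First, I would dispose of the easier claim $2h_0 = I^*$. This is literally the first equality of Proposition \ref{pr:Ih}: given a metric $I^*$ in the conformal class at infinity, the associated equidistant foliation determines $h_0$ via the expansion \eqref{eq:devel}--\eqref{eq:devel2}, and that proposition identifies $I^* = 2h_0$ unconditionally (no hyperbolicity assumption needed). So this half requires no work beyond quoting the proposition.

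For the second claim $h_{2,0} = \re(q)$, I would again begin with Proposition \ref{pr:Ih}, which yields $\II^* = h_2$. Taking the traceless parts of both sides gives $\II^*_0 = h_{2,0}$. Here I would add a brief remark that, since $I^* = 2h_0$ is merely a constant rescaling of $h_0$, the traceless decomposition is unambiguous: the trace-free part of a symmetric bilinear form with respect to a metric $g$ is invariant under replacing $g$ by $\lambda g$ for constant $\lambda > 0$, because $\tr_{\lambda g} b = \lambda^{-1}\tr_g b$ exactly compensates the rescaling of $g$ in the formula $b - \tfrac{\tr_g b}{2}g$. Now invoke Theorem \ref{tm:12}, which under the hyperbolicity hypothesis on $I^*$ asserts $\II^*_0 = \re(q)$, and chain the two equalities.

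There is effectively no obstacle here: the corollary is a bookkeeping statement collecting Proposition \ref{pr:Ih} (valid for any conformal representative) with Theorem \ref{tm:12} (which is where the hyperbolic assumption actually enters). The only minor point worth flagging is the compatibility of the traceless decomposition under the factor $2$ between $h_0$ and $I^*$, which as noted is automatic.
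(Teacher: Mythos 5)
Your proof is correct and follows exactly the paper's route: the corollary is stated as a direct consequence of Proposition \ref{pr:Ih} combined with Theorem \ref{tm:12}, which is precisely your chain of identifications. The extra remark that the trace-free part is unchanged under the constant rescaling $I^*=2h_0$ is a sensible (and correct) bookkeeping check that the paper leaves implicit.
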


The relation between the data $I^*, \II^*$ at infinity and the description
by the induced metric and second fundamental form of the dual surface in 
$C^3_+$ is quite simple. 

\begin{theorem} \label{tm:24}
$I^*_c=2I^*$, while $\II^*_c=\II^*+I^*$.
\end{theorem}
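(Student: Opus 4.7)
The plan is to combine two viewpoints on the dual surface. First, the ambient Minkowski picture: model $H^3$ as the unit hyperboloid in $\R^{3,1}$ and $C^3_+$ as the future null cone, so that a horosphere $\{x\in H^3:\langle x,v\rangle=-1\}$ is represented by the null vector $v\in C^3_+$, and for any oriented surface $\Sigma\subset H^3$ with outward unit normal $\nu$ the map $p\mapsto p+\nu(p)$ sends $\Sigma$ to the surface in $C^3_+$ parametrising the horospheres tangent to $\Sigma$ on the outside. Second, the equidistant foliation $(S_r)_{r\geq t_0}$ near infinity, whose large-$r$ limit should recover the dual Epstein surface $S^*_h$. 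Applying the first viewpoint leaf by leaf gives surfaces $\tilde S_r\subset C^3_+$, and since $p+\nu(p)$ grows like $e^r$ in ambient norm, the rescaled family $e^{-r}\tilde S_r$ converges to a map $\phi:\partial_\infty E\to C^3_+$ whose image is $S^*_h$ with $h=2I^*$.

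For the induced metric, $d(p+\nu)=(\mathrm{id}-B_r)\circ dp$, where $B_r$ is the shape operator of $S_r$ with respect to the normal pointing toward infinity, so on $\tilde S_r$ the Minkowski-induced metric equals $I_r-2\II_r+\III_r$. The expansion \eqref{eq:I*-II*} and the convention $\II_r=-\tfrac12\partial_r I_r$ give $\II_r=-\tfrac12(e^{2r}I^*-e^{-2r}\III^*)$; introducing $B^*=(I^*)^{-1}\II^*$ factorises the expansion as $I_r=\tfrac12 I^*((e^r+e^{-r}B^*)\cdot,(e^r+e^{-r}B^*)\cdot)$, and using the identity $\III^*=\II^*(I^*)^{-1}\II^*$ (a consequence of the Fefferman-Graham relation $h_4=\tfrac14 h_2(h_0)^{-1}h_2$ valid in dimension three), one obtains $\III_r=\tfrac12(e^{2r}I^*-2\II^*+e^{-2r}\III^*)$. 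Substituting, the $\II^*$ and $\III^*$ contributions cancel and
\[
I_r-2\II_r+\III_r=2e^{2r}I^*.
\]
The rescaling $\tilde p\mapsto e^{-r}\tilde p$ in $C^3_+$ divides the induced metric by $e^{2r}$, so in the limit $r\to\infty$ one obtains $I^*_c=2I^*$.

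For $\II^*_c$ I would invoke the fundamental theorem of \cite{horo}: a pair $(I^*_c,\II^*_c)$ is realised by an immersion into $C^3_+$ (uniquely up to isometries of $C^3_+$) if and only if it satisfies the Codazzi equation $d^{D^*}\II^*_c=0$ and the modified Gauss equation $\tr_{I^*_c}\II^*_c=1-K_{I^*_c}$. One then checks the proposed pair $(2I^*,\II^*+I^*)$: since $2I^*$ and $I^*$ share their Levi-Civita connection, the Codazzi equation reduces to $d^{D^*}\II^*+d^{D^*}I^*=0$, vanishing by property (4) for the first term and by parallelism of the metric for the second; and the modified Gauss equation, simplified via $\tr_{2I^*}=\tfrac12\tr_{I^*}$ and $K_{2I^*}=\tfrac12 K_{I^*}$, reduces exactly to $\tr_{I^*}\II^*=-K_{I^*}$, the Gauss equation in (4). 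Combined with $I^*_c=2I^*$, this forces $\II^*_c=\II^*+I^*$. The delicate point I expect is ensuring that the rescaled limit identifies the image with $S^*_h$ for the specific choice $h=2I^*$ rather than some other section of $C^3_+\to\partial_\infty H^3$, and that the uniqueness part of the fundamental theorem for $C^3_+$ actually applies; once these are secured, the remainder is an algebraic verification with the Gauss-Codazzi identities at infinity.
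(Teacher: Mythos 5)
Your treatment of the induced metric is essentially sound: writing the duality map in the Minkowski model as $p\mapsto p+\nu(p)$, computing the induced metric on the image of a leaf $S_r$ as $I_r\pm 2\II_r+\III_r=2e^{2r}I^*$ (using $\III^*=\II^*(I^*)^{-1}\II^*$), is a legitimate derivation of the fact the paper simply quotes from \cite[Lemma 3.5]{horo}, namely that the dual surface has induced metric $I+2\II+\III=2I^*$. The $e^{-r}$-rescaled limit is unnecessary --- the duals of the leaves $S_r$ are vertical translates of one another in $C^3_+$, so the statement already holds for the Epstein surface of $I^*$ itself --- but this is harmless.

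The second half has a genuine gap. Checking that the candidate pair $(2I^*,\II^*+I^*)$ satisfies the Gauss and Codazzi equations of $C^3_+$ does not ``force'' $\II^*_c=\II^*+I^*$. For the fixed metric $I^*_c=2I^*$, the tensors satisfying $d^{D^*}\II^*_c=0$ and $\tr_{I^*_c}\II^*_c=1-K$ form an affine space modeled on the trace-free Codazzi (i.e.\ holomorphic-quadratic-differential) tensors of $2I^*$, and by the very fundamental theorem you invoke, each such pair is realized by \emph{some} immersion into $C^3_+$; these immersions are pairwise non-congruent (an ambient isometry would identify their second fundamental forms) although they all induce the metric $2I^*$. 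So the induced metric alone does not determine $\II^*_c$, and your verification only exhibits $\II^*+I^*$ as one admissible candidate among an infinite-dimensional family. What singles out the dual surface is not its abstract induced metric but the fact that it is the graph, over $\widetilde{\partial_\infty E}\subset \C P^1$ via the vertical projection (which coincides with the hyperbolic Gauss map), of the conformal factor of $2I^*$; one must then actually compute the second fundamental form of that specific surface. The paper does this by quoting the explicit shape-operator identities $B^*_c=(E+B)^{-1}$ from \cite[Lemma 5.5]{horo} and $B^*=(E+B)^{-1}(E-B)$ from \cite{volume}, which give $2B^*_c=E+B^*$ and hence $\II^*_c=I^*_c(B^*_c\cdot,\cdot)=I^*((E+B^*)\cdot,\cdot)=\II^*+I^*$. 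If you prefer to stay in your Minkowski picture, you would need to compute the second derivative of $p+\nu(p)$ relative to the tangent totally geodesic sphere of $C^3_+$; some computation of this kind cannot be replaced by the Gauss--Codazzi consistency check.
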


The proof is in $\S$\ref{ssc:24}.


A key tool in the paper is a simple variational formula for $h_2$ under a
conformal deformation of $h_0$. We state here directly in the setting of
Poincar\'e-Einstein manifolds. Here $B$ is the Schwarzian tensor of
Osgood and Stowe \cite{osgood-stowe}, a
generalization of the Schwarzian derivative recalled in $\S$\ref{ssc:schwarzian-tensor}.

\begin{theorem} \label{tm:h2}
Let $(M, g)$ be a $d+1$-dimensional Poincar\'e-Einstein manifold, $d\geq 2$, and 
let $h_0$ and $h'_0=e^{2u}h_0$ be two metrics in the conformal class at infinity on 
$\partial M$. Let $(h_x)_{x>0}$ and $(h'_x)_{x>0}$ be the one-parameter families
of metrics on $\partial M$ determined by $h_0$ (see above) and let 
$h_2$ and $h'_2$ be the second terms in the asymptotic developments of  
$(h_x)_{x>0}$ and $(h'_x)_{x>0}$. Then:
$$ h'_2 = h_2 + \hess(u) - du\otimes du + \frac 12 \|du\|_{h_0}^2 h_0~. $$
As a consequence, the traceless part of $h_2$ and $h'_2$ are related by:
$$ h'_{2,0} =h_{2,0} + B(h_0, h'_0)~. $$
\end{theorem}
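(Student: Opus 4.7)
The plan is to work directly in Fefferman--Graham normal form and track how the defining function and transverse coordinates must change when the boundary metric is conformally rescaled. Recall that each representative $h_0$ of the conformal class determines a unique defining function $x$ near $\partial M$ such that $g = x^{-2}(dx^2 + h_x)$ with $h_x|_{x=0} = h_0$; equivalently, $x$ is characterized by $h_0$ and the eikonal equation $|d\log x|_g^2 = 1$. For $h'_0 = e^{2u}h_0$ the new defining function can be written as $x' = e^{\hat u}x$ with $\hat u|_{\partial M} = u$, accompanied by a correction $y'^i = y^i + w^i(x,y)$ (with $w^i = O(x^2)$) of the transverse coordinates chosen so that in the new coordinates the mixed terms $g(\partial_{x'},\partial_{y'^i})$ vanish.

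To determine $\hat u$, I would expand $\hat u = u + u_1(y)x + u_2(y)x^2 + O(x^3)$ and plug into $|d\log x'|_g^2 = |d\hat u + d\log x|_g^2 = 1$. Using $g^{xx} = x^2$ and $g^{y^i y^j} = x^2 h_x^{ij}$, the cross-term $\langle d\hat u,d\log x\rangle_g$ equals $x\partial_x\hat u$ while $|d\hat u|_g^2$ begins at order $x^2$. Matching orders forces $u_1 = 0$ and $u_2 = -\tfrac14\|du\|_{h_0}^2$, hence
\begin{equation*}
 x' = xe^u\bigl(1 - \tfrac14\|du\|_{h_0}^2\,x^2 + O(x^3)\bigr).
\end{equation*}
A similar expansion of $g(\partial_{x'}, \partial_{y'^i}) = 0$ fixes $w^i(x,y) = -\tfrac12 x^2 \nabla^i u + O(x^3)$ (indices raised with $h_0$); it is through this correction that the Christoffel symbols of $h_0$ enter the final formula and produce a genuine Hessian rather than merely a coordinate second derivative.

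Substituting these expansions back into $g$ and collecting the coefficient of $x'^2$ in $h'_{x'}$, four contributions appear: the original coefficient $h_2$ coming from the intrinsic $x^2$ term of $h_x$; a term $-du\otimes du$ from the boundary part of $|d\hat u|_g^2$; a term $\hess(u)$ generated by $w^i$ acting on $h_x$ through the change of variables; and a term $\tfrac12\|du\|_{h_0}^2 h_0$ from the overall conformal factor $(x/x')^2 = e^{-2u}\bigl(1 + \tfrac12\|du\|_{h_0}^2\,x^2 + O(x^3)\bigr)$. Summing these yields the announced formula
\begin{equation*}
 h'_2 = h_2 + \hess(u) - du\otimes du + \tfrac12\|du\|_{h_0}^2 h_0,
\end{equation*}
with $\hess$ computed with respect to $h_0$.

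For the traceless statement one computes $\tr_{h_0}(h'_2-h_2) = \Delta u + \tfrac{d-2}{2}\|du\|_{h_0}^2$ and observes that since $h'_0 = e^{2u}h_0$ one has $\tr_{h'_0}(h'_2)\,h'_0 = \tr_{h_0}(h'_2)\,h_0$, so $h'_{2,0} - h_{2,0} = (h'_2 - h_2) - \tfrac1d\tr_{h_0}(h'_2 - h_2)\,h_0$. Carrying out the subtraction produces exactly the Osgood--Stowe Schwarzian tensor $B(h_0,h'_0) = \hess(u) - du\otimes du - \tfrac1d(\Delta u - \|du\|_{h_0}^2)h_0$ recalled in \S\ref{ssc:schwarzian-tensor}. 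The main difficulty throughout is the bookkeeping of the transverse correction $w^i$: identifying the Hessian (as opposed to a plain second coordinate derivative) requires tracking how $w^i$ interacts with the $h_0$-Christoffels, while all the remaining manipulations are routine power-series expansions.
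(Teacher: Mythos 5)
Your proof is correct in substance but follows a genuinely different route from the paper's. The paper splits into cases: for $d\geq 3$ it simply quotes the Fefferman--Graham identity $h_2=-\sch_{h_0}$ together with the conformal transformation rule of the Schouten tensor from \cite{Be}, and the formula drops out in one line; for $d=2$, where the Schouten tensor is unavailable, it instead identifies $h_2=\II^*=\II^*_c-I^*$ and derives the transformation law from the variation of the second fundamental form of the dual surface in the space of horospheres $C^3_+$ (Lemmas \ref{lm:II*} and \ref{lm:II*c}). You instead perform the direct change of Fefferman--Graham normal form: solving $|d\log x'|_g^2=1$ order by order to get $x'=xe^u(1-\tfrac14\|du\|_{h_0}^2x^2+O(x^3))$, correcting the transverse coordinates by $w^i=-\tfrac12x^2\nabla^iu+O(x^3)$, and collecting the coefficient of $x'^2$. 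Your intermediate quantities check out (so does the trace computation $\tr_{h_0}(h'_2-h_2)=\Delta u+\tfrac{d-2}{2}\|du\|_{h_0}^2$ and the reduction to the Osgood--Stowe tensor), and your argument has the virtue of being uniform in $d\geq 2$ and independent of whether $h_2$ is locally determined -- which is exactly why it survives the critical case $d=2$, where $h_2$ sits at the order $x^d$ carrying global information. This is essentially the alternative proof that the paper attributes to Moroianu and describes as ``conceptually simpler but computationally a bit more involved''; what the paper's route buys in exchange is that each case reduces to a citation or to geometric statements about $C^3_+$ that are reused elsewhere (Theorem \ref{tm:12}, Corollary \ref{cr:h2}).

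One caveat: the decisive step -- substituting the expansions of $x'$ and $y'$ back into $g$ and verifying that the four contributions sum to $\hess(u)-du\otimes du+\tfrac12\|du\|_{h_0}^2h_0$, in particular that the interaction of $w^i$ with the $h_0$-Christoffel symbols and with the cross-term $dx'\,d\hat u$ produces exactly $\hess(u)$ and flips the sign of $du\otimes du$ -- is described but not carried out. Since this bookkeeping is precisely where sign errors occur, a complete write-up should display that computation; as it stands your argument is a correct and well-chosen strategy with the key expansion coefficients verified, rather than a finished proof.
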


The proof can be found in \S \ref{sc:pe}. For $d\geq 3$ it is a
direct consequence of an explicit relation between $h_2$ and $h_0$, while
for $d=2$ it uses the relation with surfaces in the space of horospheres.

As a consequence, we can describe $\II^*$ or $\II^*_c$ when $I^*$ is any
metric in the conformal class at infinity of $E$. We will see some
interesting examples below.
To simplify notations, we use the following notation. If $h$ is a 
Riemannian metric on a surface $S$ and $u:S\to\R$ is a smooth function, then
$$ \Bb(h,e^{2u}h) = \hess_h(u) -du\otimes du + \frac 12 \|du\|_h h~. $$
Note that $\Bb$ is a kind of non trace-free version of the Schwarzian 
tensor, and $B(h,e^{2u}h)$ is the traceless part of $\Bb(h,e^{2u}h)$.

\begin{cor} \label{cr:h2}
Suppose that $\bar I^*=e^{2u}I^*$, where  $I^*$ is a metric
in the conformal class at infinity $c$ of $E$. Then $\bar I^*_c=e^{2u}I^*_{c}$, while 
\begin{equation}
  \label{eq:confII*}
  \bar{\II^*} = \II^* + \Bb(I^*,\bar I^*)~, 
\end{equation}
\begin{equation}
  \label{eq:confII*c}
  \bar{\II^*_c}=\II^*_{c} + \Bb(I^*_c,\bar I^*_c) + \frac 12(\bar I^*_c-I^*_{c})~.
\end{equation}
\end{cor}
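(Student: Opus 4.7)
The plan is to reduce each claim to a short computation built on Theorem~\ref{tm:24}, Proposition~\ref{pr:Ih}, Theorem~\ref{tm:h2}, and the scale-invariance of the operator $\Bb$. The first identity, $\bar I^*_c = e^{2u} I^*_c$, is immediate from Theorem~\ref{tm:24}: since $I^*_c = 2I^*$ and $\bar I^*_c = 2\bar I^* = 2 e^{2u} I^*$, the conformal factor carries through unchanged.

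For \eqref{eq:confII*}, I would use Proposition~\ref{pr:Ih} to identify $\II^* = h_2$ and $I^* = 2h_0$, so that the conformal change $\bar I^* = e^{2u} I^*$ corresponds exactly to $\bar h_0 = e^{2u} h_0$. Theorem~\ref{tm:h2} then yields
$$ \bar h_2 - h_2 = \hess_{h_0}(u) - du \otimes du + \tfrac{1}{2}\|du\|^2_{h_0}\, h_0. $$
The delicate point is that this difference must be recognized as $\Bb(I^*, \bar I^*) = \Bb(2h_0, 2 e^{2u} h_0)$ rather than as $\Bb(h_0, e^{2u} h_0)$. This reduces to the scale-invariance of $\Bb$ under $h \mapsto \lambda h$ for a positive constant $\lambda$: the Hessian of a scalar function is unchanged under a constant rescaling of the metric (the Levi-Civita connection is the same), the tensor $du\otimes du$ is insensitive to $h$, and the relation $\tfrac{1}{2}\|du\|^2_{\lambda h}(\lambda h) = \tfrac{1}{2}\|du\|^2_h\, h$ is immediate from $\|du\|^2_{\lambda h} = \lambda^{-1}\|du\|^2_h$. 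Hence $\Bb(2h_0, 2 e^{2u} h_0) = \Bb(h_0, e^{2u} h_0)$, and \eqref{eq:confII*} follows.

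For \eqref{eq:confII*c}, I would rewrite both sides using Theorem~\ref{tm:24}. On the one hand,
$$ \bar{\II^*_c} - \II^*_c = (\bar{\II^*} - \II^*) + (\bar I^* - I^*) = \Bb(I^*, \bar I^*) + (\bar I^* - I^*), $$
using \eqref{eq:confII*}. On the other hand, $\tfrac 12(\bar I^*_c - I^*_c) = \bar I^* - I^*$, and the scale-invariance of $\Bb$ just established gives $\Bb(I^*, \bar I^*) = \Bb(I^*_c, \bar I^*_c)$. Combining these yields \eqref{eq:confII*c}. The only real obstacle, a mild one, is the bookkeeping for the scale-invariance of $\Bb$; once that is confirmed, everything else is a direct substitution from Theorems~\ref{tm:24} and~\ref{tm:h2}.
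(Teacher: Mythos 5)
Your computations are all correct: the identity $\bar I^*_c=e^{2u}I^*_c$ does follow from $I^*_c=2I^*$, the scale-invariance $\Bb(\lambda h,\lambda e^{2u}h)=\Bb(h,e^{2u}h)$ is exactly the right bookkeeping to pass between $h_0$ and $I^*=2h_0$ (and between $I^*$ and $I^*_c=2I^*$), and the passage from \eqref{eq:confII*} to \eqref{eq:confII*c} via $\II^*_c=\II^*+I^*$ is a clean substitution. However, your route runs in the opposite direction from the paper's. In Section~\ref{sc:pe} the paper first proves Lemma~\ref{lm:II*}, a direct first-order computation of $\II^*_c$ for surfaces in the space of horospheres $C^3_+$ (comparing the deformed surface with the deformed tangent totally geodesic plane, using that such planes correspond to constant-curvature conformal metrics), integrates it to get Lemma~\ref{lm:II*c}, which \emph{is} Equation \eqref{eq:confII*c}; it then deduces \eqref{eq:confII*} via Theorem~\ref{tm:24}, and finally obtains Theorem~\ref{tm:h2} in dimension $d=2$ from that via Proposition~\ref{pr:Ih}. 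Since the Corollary lives in the hyperbolic-end setting ($d=2$), and the paper's only proof of Theorem~\ref{tm:h2} for $d=2$ is precisely this chain (the Fefferman--Graham/Schouten argument requires $d\geq 3$), your derivation "Theorem~\ref{tm:h2} $\Rightarrow$ Corollary" is formally valid but cannot replace the paper's Lemmas~\ref{lm:II*}--\ref{lm:II*c} without becoming circular. It is a legitimate proof only if one grants Theorem~\ref{tm:h2} for $d=2$ from an independent source (e.g.\ the alternative argument attributed to Moroianu at the end of Section~\ref{sc:pe}). What your route buys is economy and uniformity with the higher-dimensional picture; what the paper's route buys is an actual $d=2$ proof, grounded in the geometry of $C^3_+$, of the one case the Schouten-tensor computation does not cover.
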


Those relations extend without change to conformally flat metrics
in higher dimension, we do not elaborate on this point here.

\subsection{Linear Weingarten surfaces}

We consider linear Weingarten surfaces in $H^3$, or in hyperbolic
3-manifolds, defined as a smooth surface $S$ satisfying an equation of
the form
\begin{equation}
  \label{eq:weingarten}
  aK_e+bH+c=0~,
\end{equation}
where $a,b,c\in \R$ are constants. Here $H=\tr(B)/2$ is the mean curvature of $S$,
where $B$ is its shape operator,
while $K_e=\det(B)$ is its extrinsic curvature, related to the
Gauss curvature $K$ by the Gauss equation, $K=-1+K_e$.

We are particularly interested in some well-behaved surfaces that play a
particular role in some situations, in particular when studying quasifuchsian
3-manifolds. We will say that a smooth hypersurface $S\subset H^{d+1}$
is {\em horospherically tame}, or {\em h-tame} for short, if its principal
curvatures are everywhere in $(-1,1)$. Note that the hyperbolic Gauss map of a
complete h-tame surface in $H^{d+1}$ is injective, so that it defines a data at
infinity $(I^*,\II^*)$ on an open domain $\Omega\subset \partial_\infty H^3$, 
see $\S$\ref{ssc:hypersurfaces}.

\begin{prop} \label{pr:tame}
  If an oriented hypersurface $S\subset H^3$ is h-tame then the corresponding 
second fundamental form at infinity $\II^*$ is positive definite. 
Any admissible pair $(I^*,\II^*)$ in an open subset $\Omega\subset \partial_\infty H^3$ 
with $\II^*$ positive definite determines a smooth h-tame surface.
\end{prop}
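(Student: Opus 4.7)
The plan is to read the data $(I^*,\II^*)$ at infinity directly off the induced metric $I$ and shape operator $B$ of $S$, by writing the hyperbolic metric in a collar of $S$ toward infinity in the form \eqref{eq:I*-II*}. For the equidistant foliation based at $S$, the standard formula for the induced metric on the leaf $S_r$ at signed distance $r$ is
\[
I_r(v,w) = I\bigl((\ch r\cdot \mathrm{Id} + \sh r\cdot B)v,\;(\ch r\cdot \mathrm{Id} + \sh r\cdot B)w\bigr).
\]
Expanding $\ch r$ and $\sh r$ in $e^{\pm r}$ and matching with $I_r=\frac12(e^{2r}I^*+2\II^*+e^{-2r}\III^*)$ yields the algebraic identities
\[
I^*=\tfrac 12 I((\mathrm{Id}+B)^2\cdot,\cdot),\;\; \II^*=\tfrac 12 I((\mathrm{Id}-B)(\mathrm{Id}+B)\cdot,\cdot),\;\; \III^*=\tfrac 12 I((\mathrm{Id}-B)^2\cdot,\cdot).
\]

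The forward direction is then immediate: since $B$ is $I$-self-adjoint and $\mathrm{Id}\pm B$ commute, $\II^*$ is $I$-positive definite iff both $\mathrm{Id}-B$ and $\mathrm{Id}+B$ are positive definite, which is exactly the condition that all principal curvatures of $S$ lie in $(-1,1)$, i.e.\ h-tameness.

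For the converse, given an admissible pair $(I^*,\II^*)$ with $\II^*$ positive, the shape operator at infinity $\tilde B:=(I^*)^{-1}\II^*$ is $I^*$-self-adjoint and positive definite. The relation above rewrites as $\tilde B=(\mathrm{Id}-B)(\mathrm{Id}+B)^{-1}$, which is algebraically invertible: set
\[
B:=(\mathrm{Id}-\tilde B)(\mathrm{Id}+\tilde B)^{-1},\qquad I(v,w):=2\,I^*((\mathrm{Id}+B)^{-1}v,\;(\mathrm{Id}+B)^{-1}w).
\]
Since $\tilde B$ has positive eigenvalues $\mu_i$, $B$ has eigenvalues $(1-\mu_i)/(1+\mu_i)\in(-1,1)$, so the would-be surface is h-tame. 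The modified Gauss and Codazzi equations at infinity defining admissibility, $d^{D^*}\II^*=0$ and $\tr_{I^*}\II^*=-K_{I^*}$, translate via the algebraic dictionary of Step~1 into the classical Gauss-Codazzi equations in $H^3$ for the reconstructed pair $(I,B)$. The fundamental theorem of surface theory then produces a smooth isometric immersion $\Omega\to H^3$ with induced metric $I$ and shape operator $B$, unique up to an ambient isometry and h-tame by construction.

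The main obstacle is precisely the verification that admissibility of $(I^*,\II^*)$ is equivalent to the classical Gauss-Codazzi equations for the reconstructed $(I,B)$; this is essentially the assertion that the metric \eqref{eq:I*-II*} on $\Omega\times(r_0,\infty)$ is hyperbolic iff $(I^*,\II^*)$ is admissible, which is built into the framework of \cite{volume}. Once granted, everything else reduces to linear algebra in $\mathrm{End}(T\Omega)$ and a standard appeal to the fundamental theorem of surfaces in a space form.
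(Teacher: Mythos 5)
Your argument is correct and follows essentially the same route as the paper: the paper's proof simply quotes the relation $B^*=(E+B)^{-1}(E-B)$ from \cite{volume} and observes that the Cayley-type map $\lambda\mapsto(1-\lambda)/(1+\lambda)$ exchanges eigenvalues in $(-1,1)$ with positive eigenvalues, which is exactly your computation, except that you re-derive the relation from the equidistant-foliation expansion rather than citing it. The one step you flag as the main obstacle --- that admissibility of $(I^*,\II^*)$ translates into the classical Gauss--Codazzi equations for the reconstructed $(I,B)$ --- is likewise delegated to \cite{volume} in the paper, so nothing is missing relative to the paper's own (very terse) proof.
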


The definition of an ``admissible pair'' is given in \S \ref{ssc:ends}.

\begin{prop} \label{pr:weingarten-infinity}
Let $S$ be a h-tame surface in $H^3$, and let $I^*, \II^*$ be the corresponding
data at infinity. Suppose that $a-b+c\neq 0$. 
Then $S$ satisfies \eqref{eq:weingarten} if and only if the data at infinity
satisfies the relation 
\begin{equation}
  \label{eq:weingarten*}
  \det((a-b+c)B^* + (c-a)E) = b^2-4ac~.
\end{equation}
\end{prop}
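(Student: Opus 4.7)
\smallskip
\noindent\textbf{Proof plan for Proposition \ref{pr:weingarten-infinity}.}

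The plan is to translate the Weingarten equation on $S$ into an equation on $B^*=(I^*)^{-1}\II^*$ by using the explicit algebraic relation between the shape operator $B$ of $S\subset H^3$ and the shape operator at infinity $B^*$, then to verify by a short $2\times 2$ determinant calculation that the resulting equation is exactly \eqref{eq:weingarten*}.

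First I would recall, from the equidistant foliation description of $I^*$ and $\II^*$ in \S\ref{ssc:hypersurfaces}, the standard Möbius--type relation
$$ I^* = \tfrac 12\, I\bigl((E+B)\cdot,(E+B)\cdot\bigr),\qquad \II^* = \tfrac 12\, I\bigl((E-B)(E+B)\cdot,\cdot\bigr), $$
valid as soon as $E+B$ is invertible, i.e.\ as soon as $S$ is h-tame. Taking $B^*=(I^*)^{-1}\II^*$ and using that $B$ is $I$-self-adjoint, so $E-B$ and $E+B$ commute, gives
$$ B^* = (E-B)(E+B)^{-1}, $$
an involution whose inverse relation reads $B=(E-B^*)(E+B^*)^{-1}$. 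The h-tameness hypothesis makes $E+B$ and $E+B^*$ positive definite so that all inverses exist.

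Next I would translate $H=\tfrac12\tr(B)$ and $K_e=\det(B)$ into functions of $t=\tr(B^*)$ and $d=\det(B^*)$. The determinantal part is immediate: $\det(B)=\det(E-B^*)/\det(E+B^*)=(1-t+d)/(1+t+d)$. For the trace, I would use Cayley--Hamilton in dimension $2$, $(B^*)^2=tB^*-dE$, to compute
$$ (E-B^*)(E+B^*)^{-1} = \frac{(1+t-d)E - 2B^*}{1+t+d}\quad\text{in 2D}, $$
after expanding $(E+B^*)^{-1}=((1+t)E-B^*)/\det(E+B^*)$ via the adjugate formula. Taking the trace yields $\tr(B)=2(1-d)/(1+t+d)$. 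Substituting into $aK_e+bH+c=0$ and clearing the denominator produces the linear relation
$$ (a+b+c) + (c-a)\,t + (a-b+c)\,d = 0. $$

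Finally I would compare this with the proposed infinity equation. In dimension $2$, $\det(pB^*+qE) = p^2 d + pq\,t + q^2$ for any scalars $p,q$. Taking $p=a-b+c$ and $q=c-a$, the identity $(a-b+c)(a+b+c)=(a+c)^2-b^2$ shows that multiplying the previous linear relation by $(a-b+c)\neq 0$ yields exactly
$$ (a-b+c)^2 d + (a-b+c)(c-a)\,t + (c-a)^2 = b^2-4ac, $$
which is \eqref{eq:weingarten*}. The hypothesis $a-b+c\neq 0$ is used precisely to make this multiplication reversible, so both implications hold. The main (mild) obstacle is the first step, recording the relation $B^*=(E-B)(E+B)^{-1}$ from the equidistant foliation at infinity cleanly; after that the proof is a routine two-by-two algebraic manipulation.
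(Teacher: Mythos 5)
Your proposal is correct and follows essentially the same route as the paper: invert the Möbius relation between $B$ and $B^*$, express $\det(B)$ and $\tr(B)$ as $(1-t+d)/(1+t+d)$ and $2(1-d)/(1+t+d)$, clear the denominator (nonzero by h-tameness) to get the linear relation $(a-b+c)d+(c-a)t+(a+b+c)=0$, and multiply by $a-b+c\neq 0$ to recognize it as $\det((a-b+c)B^*+(c-a)E)=b^2-4ac$. The only difference is that you derive the trace formula via Cayley--Hamilton and the adjugate, where the paper simply quotes the resulting identities; both computations agree.
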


Here $B^*$ is the ``shape operator at infinity'', the unique bundle morphism
self-adjoint for $I^*$ such that $\II^*=I^*(B^*\cdot, \cdot)$, and $E$ is the
identity. 

Note that the case when $a-b+c=0$ (or $a+b+c=0$, after changing the orientation)
corresponds to the case treated e.g. in \cite{galvez-martinez-milan}.

Together with \eqref{eq:confII*} this leads to the following characterization 
of linear Weingarten surfaces in terms of solutions of Monge-Amp\`ere equations.

\begin{prop} \label{pr:2cond}
Let $(I^*, \II^*)$ be an admissible pair defined on an open domain $\Omega\subset
\partial_\infty H^3$, and let $u:\Omega\to \R$. The surface defined by the metric
at infinity $e^{2u}I^*$ is h-tame and satisfies \eqref{eq:weingarten} if and only
if:
\begin{enumerate}
\item $\II^*+\Bb(I^*,\bar I^*)$ is positive definite,
\item $u$ satisfies the Monge-Amp\`ere equation
  \begin{equation}
    \label{eq:ma}
    \det_{I^*}((a-b+c)(\II^* + \hess(u) - du\otimes du + \frac 12\| du\|^2_{I^*}I^*)+
(c-a)e^{2u}I^*) = (b^2-4ac)e^{4u}~. 
  \end{equation}
\end{enumerate}
\end{prop}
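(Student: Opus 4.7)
The plan is to combine Proposition \ref{pr:tame}, Corollary \ref{cr:h2}, and Proposition \ref{pr:weingarten-infinity}, finishing with a short determinantal calculation to handle the conformal rescaling. The data at infinity corresponding to the metric $\bar I^* = e^{2u}I^*$ is given, by Corollary \ref{cr:h2}, by the pair $(\bar I^*, \bar{\II^*})$ with $\bar{\II^*} = \II^* + \Bb(I^*,\bar I^*)$. Proposition \ref{pr:tame} then identifies h-tameness of the associated surface with positive-definiteness of $\bar{\II^*}$, yielding condition (1) directly.

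Assuming (1), Proposition \ref{pr:weingarten-infinity} applied to the pair $(\bar I^*,\bar{\II^*})$ characterizes the linear Weingarten condition \eqref{eq:weingarten} by
$$ \det\bigl((a-b+c)\bar B^* + (c-a)E\bigr) = b^2-4ac, $$
where $\bar B^* = (\bar I^*)^{-1}\bar{\II^*}$. To translate this into an equation in terms of $I^*$ and $u$, I would write $\bar B^* = e^{-2u}(I^*)^{-1}\bar{\II^*}$, so that
$$ (a-b+c)\bar B^* + (c-a)E = e^{-2u}(I^*)^{-1}\bigl[(a-b+c)\bar{\II^*} + (c-a)e^{2u}I^*\bigr]. $$
Taking determinants of endomorphisms in dimension $2$ produces a factor $e^{-4u}$, converting the Weingarten condition into
$$\det_{I^*}\bigl((a-b+c)\bar{\II^*} + (c-a)e^{2u}I^*\bigr) = (b^2-4ac)e^{4u}.$$
Substituting the explicit formula $\bar{\II^*} = \II^* + \hess(u) - du\otimes du + \tfrac 12 \|du\|^2_{I^*}I^*$ coming from the definition of $\Bb$ gives exactly \eqref{eq:ma}, which is condition (2). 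The equivalence is in both directions because each step is an ``if and only if''.

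The proof is in essence an assembly of the three referenced results, and I expect no serious obstacle: the only genuine calculation is the rescaling of the shape operator above, which is routine once one keeps track of the conformal factor. All nontrivial content has been localized into Corollary \ref{cr:h2} (which rests on Theorem \ref{tm:h2}) and Proposition \ref{pr:weingarten-infinity}; the role of Proposition \ref{pr:2cond} is simply to repackage them as a single Monge--Amp\`ere problem for $u$.
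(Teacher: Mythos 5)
Your proposal is correct and follows essentially the same route as the paper: the paper likewise invokes the conformal transformation rule $\bar{\II^*}=\II^*+\Bb(I^*,\bar I^*)$, identifies h-tameness with positive definiteness of $\bar{\II^*}$ via Proposition \ref{pr:tame}, and then applies Proposition \ref{pr:weingarten-infinity} to the pair $(\bar I^*,\bar{\II^*})$, using the identity $\det_{e^{2u}I^*}=e^{-4u}\det_{I^*}$ to produce the factor $e^{4u}$ on the right-hand side of \eqref{eq:ma}. Nothing is missing.
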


Equation \eqref{eq:ma} can be written as
  \begin{equation}
    \label{eq:ma2}
    \det((a-b+c)(B^* + \hess^\sharp(u)-du\otimes Du + \frac 12\| du\|^2_{I^*}E)+
(c-a)e^{2u}E) = (b^2-4ac)e^{4u}~, 
  \end{equation}
where $Du$ is the gradient of $u$ for $I^*$ and $\hess^\sharp(u)=DDu$
is the Hessian of $u$ considered as a 1-form with values in the tangent
space of $\Omega$.

The behavior of those linear Weingarten surfaces will likely be simpler
if the following two conditions are satisfied:
\begin{itemize}
\item $b^2-4ac>0$, since \eqref{eq:ma2} is then of elliptic type,
\item $(c-a)(a-b+c)\leq 0$, since the elliptic solutions of \eqref{eq:ma2}
will then always satisfy the first condition in Proposition \ref{pr:2cond}.
\end{itemize}

We now outline three interesting special cases.

\subsubsection{Minimal surfaces}

We can take $a=0, b=1, c=0$. In this case $b^2-4ac>0$ and 
$(c-a)(a-b+c)=0$, and \eqref{eq:ma} becomes simply $\det(B^*)=1$. 
Therefore \eqref{eq:ma2} is simply:
$$ \det(B^* + \hess^\sharp(u) - du\otimes Du + \frac 12\| du\|^2_{I^*}E) = e^{4u}~. $$

\subsubsection{CMC-1 surfaces}

Here we can take $a=0, b=c=1$ (this corresponds to changing the 
orientation of the surface). Then $a-b+c=0$, so Proposition 
\ref{pr:2cond} cannot directly be used. However following the same
computations as in $\S$\ref{ssc:weingarten} shows that \eqref{eq:ma}
becomes simply 
$$ \tr(B^*)+2=0~. $$
As a consequence, \eqref{eq:ma2} is quasilinear, a fact that is not surprising
since those surfaces are related to minimal surfaces in Euclidean space \cite{bryant:cmc1}
and have a Weierstrass representation.

\subsubsection{Convex constant Gauss curvature surfaces}

This is the case of surfaces of constant curvature $K_e=k\in (0,1)$. 
We can then take $a=1, b=0, c=-k\in (-1,0)$. Then $a+b+c\neq 0$,
$b^2-4ac>0$, and $(c-a)(a+b+c)\leq 0$.

\subsection{The Thurston metric at infinity}

We outline here another special case of the relations described above, that
was also a motivation for writing those notes. It is based on the work of
Dumas \cite{dumas-duke}.

Let $E$ be a hyperbolic end and let $S$ be the convex pleated surface which
is the non-ideal boundary of $E$. The data at infinity $(I^*, \II^*)$ corresponding
to $S$ is quite interesting: $I^*$ is the ``Thurston metric'' on $\partial_\infty E$
associated to the pleated surface $S$, while $\II^*$ has rank at most $1$ at each point, 
and determines a measured lamination on $\partial_\infty E$. It is zero on the 
subset of points projecting to the totally geodesic part of the boundary of the convex core,
and, on regions projecting to the support of the measured bending lamination,
it is zero in directions of the lamination.

We can also consider on $\partial_\infty E$ the hyperbolic metric $\bar I^*$,
and the corresponding second fundamental form at infinity $\bar{\II^*}$. Then
$$ \bar I^* = e^{2u} I^*~, $$
where $u:\partial_\infty E\to \R$ is the solution of the equation 
\begin{equation}
  \label{eq:laplacian}
  \Delta u = -K-e^{2u}~, 
\end{equation}
where $K$ is the curvature of the Thurston metric $I^*$, which takes values 
in $(-1,0)$. Moreover, 
$$ \bar{\II^*} = \II^* + \Bb(I^*, \bar I^*)~. $$
Taking the trace-free part of this relation leads precisely to 
\cite[Theorem 7.1]{dumas-duke}. 

A bound on solutions of \eqref{eq:laplacian} can then lead to a bound
on the difference between $\II^*$ and $\bar{\II^*}$, as done (for the
trace-free components) in \cite[Theorem 11.4]{dumas-duke}. We do not
elaborate more in this direction here.

\subsection{Content}

Section \ref{sc:background} contains background material used in the rest of the paper.
Section \ref{sc:foliation} then contains details on the measured foliation at infinity and the proof of Theorems \ref{tm:schlafli} and \ref{tm:ext}. Section \ref{sc:horo} focuses on hypersurfaces in the space of horospheres and contains the proofs of Theorems \ref{tm:12} and \ref{tm:24}. Finally, Section \ref{sc:pe} presents the proof of Theorem \ref{tm:h2}, and Section \ref{sc:weingarten} gives some details on the application to linear Weingarten surfaces.


\subsection*{Acknowledgements}

I am grateful to Sergiu Moroianu for helpful remarks.

\section{Background material}
\label{sc:background}

\subsection{The Schwarzian derivative}
\label{ssc:schwarzian}

Let $\Omega\subset \C$, and let $f:\Omega\to \C$ be holomorphic. The Schwarzian
derivative of $f$ is a meromorphic quadratic differential defined as
$$ \cS(f) = \left(\left(\frac{f''}{f'}\right)' -
\frac 12\left(\frac{f''}{f'}\right)^2\right) dz^2~. $$
It has two remarkable properties.
\begin{itemize}
\item $\cS(f)=0$ if and only if $f$ is a M\"obius transformation,
\item $\cS(g\circ f)=f^*\cS(g)+\cS(f)$.
\end{itemize}
As a consequence of those two properties, the Schwarzian derivative
is defined for any holomorphic map from a surface equiped with a
complex projective structure to another (see next section). It is a meromorphic quadratic
differential on the domain, holomorphic if $f'\neq 0$ everywhere.

There are several nice geometric interpretations of the Schwarzian 
derivative, in particular in \cite{thurston:zippers}, \cite{epstein:reflections}
and in \cite{dumas-duke}.

\subsection{The Schwarzian tensor}
\label{ssc:schwarzian-tensor}

Osgood and Stowe \cite{osgood-stowe} generalized the Schwarzian derivative to 
the notion of Schwarzian tensor, associated to a conformal map between two
Riemannian manifolds of the same dimension. 

\begin{defi}
Let $(M,g)$ be a Riemannian $d$-dimensional manifold, and let $u:M\to \R$. 
The Schwarzian tensor associated to the metrics $g$ and $e^{2u}g$ on $M$
is defined as 
$$ B(g,e^{2u}g) = (\hess_g(u)-du\otimes du)_0~, $$
where the index $0$ denotes the traceless part with respect to $g$.
\end{defi}

The Schwarzian tensor is a natural generalization of the Schwarzian
derivative in the sense that if $\Omega \subset \C$ and $f:\Omega\to \C$
is a holomorphic map, then
\begin{equation}
  \label{eq:schwarzian}
B(|dz|^2, f^*(|dz|^2)) = \re(\cS(f))~.
\end{equation}

The Schwarzian tensor also shares some key properties of the Schwarzian
derivative. 
\begin{itemize}
\item It behaves well under compositions of conformal
maps: if $(M,g)$ is a Riemannian manifold and $u,v:M\to \R$ are
smooth functions, then
\begin{equation}
  \label{eq:schwarzian2}
  B(g,e^{2u+2v}g) = B(g,e^{2u}g)+B(e^{2u}g, e^{2u+2v}g)~.
\end{equation}
\item It behaves well under diffeomorphism: 
if $\phi:N\to M$ is a diffeomorphism and $h$ and $h'$ are two conformal
metrics on $M$, then
\begin{equation}
  \label{eq:diffeo}
B(\phi^*h,\phi^*h') = \phi^*B(h,h')~.
\end{equation}
\item If $h_H$ is the hyperbolic metric on the ball $B^d\subset \R^d$
given by the Poincar\'e model, and $h_E$ is the Euclidean metric on
$\R^d$, then $B(h_E, h_H)=0$. 
\item Similarly, if $h_S$ is the spherical metric on $\R^d$ given as
the push-forward of the spherical metric by the stereographic projection,
then $B(h_E, h_S)=0$.
\end{itemize}

Here we give two simple interpretations of the Schwarzian tensor:
\begin{itemize}
\item as the difference between the second terms in the asymptotic development
of metrics on Poincar\'e-Einstein manifolds, when one conformally varies
the first term, 
\item as the variation in second fundamental forms of certain 
hypersurface in a $(d+1)$-dimensional space associated to conformal 
(and conformally flat) metrics.
\end{itemize}
The second interpreation is related to the interpretation in \cite{volume},
but more in the setting of isometric embeddings in the space of horospheres
as developed in \cite{horo}. The hypersurfaces that appear are dual, in a sense
that will be made precise below, to the Epstein hypersurfaces of the metrics.


\subsection{Complex projective structures}

We will need to consider complex projective structures on closed
surfaces. Recall that a complex projective structure (also called
$\C P^1$-structure) is a $(G,X)$-structure 
(see \cite{thurston-notes,goldman:geometric}), where
$X=\C P^1$ and $G=PSL(2, \C)$. In other terms, they are defined 
by atlases with values in $\C P^1$, with change of coordinates
in $PSL(2, \C)$. We denote by $\cCP_S$ the space of $\C P^1$-structures
on $S$.

Given a complex structure $c\in \cT_S$ on $S$, there is by the
Riemann Uniformization Theorem a unique hyperbolic metric on $S$
compatible with $c$. This hyperbolic metric determines a complex
projective structure on $S$, because the hyperbolic plane can be
identified with a disk in $\C P^1$, on which hyperbolic isometries
act by elements of $PSL(2, \C)$ fixing the boundary circle.
We denote this complex projective structure by $\sigma_F(x)$, and
call it the Fuchsian complex projective structure of $c$.

Let $\sigma \in \cCP_S$, and let $c\in \cT_S$ be the underlying
complex structure. There is a unique map $\phi:(S,\sigma)\to
(S,\sigma_F(c))$ holomorphic for the underlying complex structure. 
Let $q(\sigma)=\cS(\phi)$ be its Schwarzian derivative. This
construction defines a map $\cQ:\cCP_S\to T^*\cT_S$, sending
$\sigma$ to $(c,q)$, with the holomorphic quadratic differential 
$q$ considered as a cotangent vector to $\cT_S$ at $c$.
The map $\cQ$ is known to be a homeomorphism \cite{dumas-survey}.

\subsection{The holomorphic quadratic differential at infinity of quasifuchsian
manifolds}
\label{ssc:q}

We now consider a quasifuchsian manifold $M$ homeomorphic to $S\times \R$, where
$S$ is a closed surface of genus at least $2$. We note its boundary at infinity
by $\partial_\infty M$, so that $\partial_\infty M$ is the disjoint union of two
surfaces $\partial_\pm M$, each homeomorphic to $S$.

The boundary at infinity $\partial_\infty M$ is the quotient of the complement
of the limit set of $M$ by the action of $\pi_1M$ on $H^3$. Since hyperbolic
isometries act on $\partial_\infty H^3$ by complex projective transformations, 
$\partial_\infty M$ is equiped with a complex projective structure, that we 
denote by $\sigma$. We denote by $c$ the complex structure underlying $\sigma$.

\begin{defi}
We denote by $q=q(\sigma)$ the ``holomorphic quadratic differential at infinity''
of $M$.
\end{defi}

Therefore, $q$ can be considered as minus the ``difference'' between the quasifuchsian 
complex projective structure on $\partial_\infty M$ and the  Fuchsian 
complex projective structure obtained by applying the Riemann uniformization theorem
to the complex structure $c$.

\subsection{Hyperbolic ends}
\label{ssc:ends}

A {\em hyperbolic end} as considered here is non-complete hyperbolic manifold, 
diffeomorphic to $S\times \R_{>0}$, where $S$ is a closed surface of genus
at least $2$, which is complete on the side corresponding to $\infty$ but
has a metric completion obtained by adding a concave pleated surface on 
the boundary corresponding to $0$. A typical example is a connected component
of the complement of the convex core in a convex co-compact hyperbolic manifold.

Given a hyperbolic end $E$, we denote by $\partial_\infty E$ its ideal
boundary, corresponding to $\infty$ in the identification of $E$ with 
$S\times \R_{>0}$, and by $\partial_0E$ the concave pleated surface which
is the boundary of its metric completion corresponding to $0$.
We will denote by $\cE_S$ the space of hyperbolic ends homeorphic
to $S\times \R_{>0}$, considered up to isotopy.

Let $E$ be a hyperbolic end. Its ideal boundary $\partial_\infty E$ 
is equipped with a $\C P^1$-structure $\sigma$. This is clear in the 
simpler case when the developing map of $E$ is injective, since in
this case $\partial_\infty E$ is the quotient of a domain in $\C P^1$
(identified with $\partial_\infty H^3$) by an action of $\pi_1E$
by elements of $PSL(2, \C)$. In the general case this picture has
to be slightly generalized, and $\partial_\infty E$ is a quotient
of a simply connected surface which has a (not necessarily injective)
projection to $\C P^1$, see \cite{tanigawa:grafting,thurston-notes}.

In fact, this map from $\cE_S$ to $\cCP_S$ is one-to-one, 
and a hyperbolic end can be constructed for any $\C P^1$-structure on 
$S$, see \cite{tanigawa:grafting}.

Given a metric $I^*$ in the conformal class at infinity of $E$, it defines an
equidistant foliation of a neighborhood of infinity in $E$ in the following 
way. Given a real number $r$, consider the Epstein surface $S_r$ of 
$e^{2r}I^*$. Then for $r$ large enough, $S_r$ is smooth and embedded, and
even locally convex.
Moreover, if $r,r'$ are large enough, then $S_r$ and $S_{r'}$ are 
at fixed distance $|r'-r|$. The hyperbolic metric then has the 
asymptotic expansion \eqref{eq:I*-II*}, and the term $\II^*$ does
not change if one replaces $I^*$ with $e^{2r}I^*$.

In addition, $I^*$ and $\II^*$ satisfy two relations (see \cite[$\S$5]{volume}):
$\II^*$ is Codazzi for $I^*$, that is, $d^{D^*}\II^*=0$, where $D^*$ is the
Levi-Civita connection of $I^*$, and $\tr_{I^*}\II^*=-K^*$, where $K^*$
is the curvature of $I^*$. We will say that $(I^*,\II^*)$ is an 
{\em admissible pair} if it satisfies those two equations.

\subsection{Poincaré-Einstein manifolds}
\label{ssc:pe}

A Poincaré-Einstein manifold $(M^{d+1},g)$ is a complete Riemannian manifold
such that the Riemannian metric is Einstein and can be written as 
$$ g=\frac{\bar g}{x^2}~, $$
where $\bar g$ is a smooth metric on a compact manifold with boundary
$\bar M$, $M$ is the interior of $\bar M$, and $\| dx\|_{\bar g}=1$ on 
$\partial \bar M$, see \cite{fefferman-graham}.

Poincaré-Einstein manifolds have a well-defined boundary at infinity $\partial_\infty M$,
identified with $\partial \bar M$, endowed with a conformal class
of metrics, defined as the conformal class of the restriction of $\bar g$
to $\partial_\infty M$. In the neighborhood of each connected component of the
boundary at infinity, one can write $\bar g= dx^2+h_x$, 
where $h_x$ is a one-parameter family of metrics on $\partial_\infty M$.

When $d$ is even, $(h_x)_{x>0}$ has the asymptotic expansion
$$ h_x\stackrel{x\to 0}{\sim} \sum_{\ell=0}^\infty h_{x,\ell}(x^{d}\log x)^\ell~. $$ 
where $h_{x,\ell}$ are one-parameter families of tensors on $M$ depending smoothly on $x$.
The tensor $h_{x,0}$ has a Taylor expansion at $x=0$ given by
\[h_{x,0}\stackrel{x\to 0}\sim \sum_{j=0}^\infty x^{2j}h_{2j}\]
where $h_{2j}$ are formally determined by $h_0$ if $j<d/2$ and formally determined by the pair 
$(h_0,h_d)$ for $j>d/2$; for $\ell\geq 1$, the tensors $h_{x,\ell}$ have a Taylor expansion 
at $x=0$ formally determined by $h_0$ and $h_d$.

When $d$ is odd, $(h_x)_{x>0}$ has the simpler asymptotic expansion
$$ h_x\stackrel{x\to 0}{\sim} h_0+x^2h_2+\cdots x^{d-1}h_{d-1} + x^d h_d 
+ O(x^{d+1})~, $$
where $\tr_{h_0}h_d=0$. All terms $h_k$ for $k<d$ are formally determined
by $h_0$, while the traceless part of $h_d$ is ``free''. All other terms
in the asymptotic development are determined by $h_0$ and by the traceless
part of $h_d$.

\subsection{Hypersurfaces}
\label{ssc:hypersurfaces}

Let $S\subset H^{d+1}$ be an oriented hypersurface. We will denote by $I$ its 
induced metric (classically called its ``first fundamental form'').

Let $N$ be the oriented unit normal vector field to $S$. The shape operator
of $S$ is the bundle morphism $B:TS\to TS$ defined as follows:
$$ \forall x\in S, \forall u\in T_sS, Bu = D_uN~, $$
where $D$ is the Levi-Civita connection on $H^{d+1}$. Then $B$ is self-adjoint
with respect to $I$. 

The second fundamental form of $S$ is defined as
$$ \forall x\in S, \forall u,v\in T_sS, \II(u,v)=I(Bu,v)=I(u,Bv)~, $$
and its third fundamental form as
$$ \forall x\in S, \forall u,v\in T_sS, \III(u,v)=I(Bu,Bv)~. $$

The {\em hyperbolic Gauss map} of $S$ is the map $G:S\to \partial_\infty H^{d+1}$
sending a point $x\in S$ to the endpoint of the geodesic ray starting from 
$x$ in the direction of the oriented normal $x$. 

\begin{defi}
We say that $S$ is {\em horospherically tame}, or {\em h-tame}, if its 
principal curvatures are everywhere in $(-1,1)$.
\end{defi}

\begin{remark}
If $S$ is complete and h-tame, than its hyperbolic Gauss map is a 
diffeomorphism between $S$ and a connected component of $\partial_\infty H^{d+1}
\setminus \partial_\infty S$.
\end{remark}

\begin{defi}
Suppose that the hyperbolic Gauss map $G$ of $S$ is injective. The {\em 
data at infinity} associated to $S$ is the pair $(I^*,\II^*)$ defined
on $G(S)$ by
$$ I^* = \frac 12 G_*(I+2\II+\III)~, $$
$$ \II^* = \frac 12 G_*(I-\III)~. $$
The {\em shape operator at infinity} is the bundle morphism $B^*:
T(G(S))\to T(G(S))$ which is self-adjoint for $I^*$ and such that 
$$ \forall y\in G(S), \forall u,v\in T_yG(S)), \II^*(u,v)=I^*(u,B^*v)=I^*(B^*u,v)~. $$
\end{defi}

One can then prove (see \cite{volume}) that $(I^*,\II^*)$ is an admissible
pair, as defined above.

\subsection{The space of horospheres}
\label{ssc:c3+}

We denote by $C^3_+$ the space $S^2\times \R$, with the degenerate metric
$g=e^{2t}g_0+0\times dt^2$, where $t\in \R$. The notation comes from the
fact that it can be identified with the future light cone of a point 
in the 4-dimensional de Sitter space. Equivalently, it can be identified
with the space of horospheres in the 3-dimensional hyperbolic space with 
the natural metric defined in terms of intersection angles, see
\cite[\S 2]{horo},

This space has a number of features that are strongly reminiscent of a 
3-dimensional space of constant curvature.

\begin{itemize}
\item $PSL(2,\C)$ acts by isometries on $C^3_+$. The simplest way to see
this is by considering $C^3_+$ as the space of horospheres in $H^3$.
\item There is a notion of ``totally geodesic planes'', which are the
2-dimensional spheres with induced metric isometric to the round metric
on $S^2$. Those planes can be identified with the set of horospheres
going through a given point in $H^3$, and the action of $PSL(2,\C)$ on
those totally geodesic planes is transitive.
\item There is a 2-dimensional space of totally geodesic planes going
through each point in $C^3_+$.
\end{itemize}
In addition, there is at each point a distinguished ``vertical'' direction,
corresponding to the kernel of the metric. 
Moreover the integral lines of those vertical directions
have a canonical affine structure.

Although the metric is degenerate, it is possible to define an analog 
of the Levi-Civita connection at a point $x\in C^3_+$. However it depends 
on the choice of a non-degenerate plane $H\subset T_xC^3_+$, and is defined for vector
fields tangent to $H$ (see the last paragraph of \cite[\S 2]{horo}). 

Using this connection, one can define a notion of second fundamental
form $\II$ of a surface $S\subset C^3_+$ which is nowhere vertical
(see \cite[\S 5]{horo}).
Using the induced metric $I$, one can then define the shape operator
$B:TS\to TS$ as the self-adjoint operator such that $\II=I(B\cdot,\cdot)$.
It satisfies two equations (see \cite[\S 6]{horo}):
\begin{itemize}
\item the Codazzi equation $d^{\nabla}B=0$, where $\nabla$ is the Levi-Civita
connection of $I$ on $S$,
\item a modified form of the Gauss equation: the curvature of $I$ is equal
to $K=1-\tr(B)$.
\end{itemize}

\section{The measured foliation at infinity}
\label{sc:foliation}

\subsection{The Fischer-Tromba metric}

Let $g$ be a hyperbolic metric on $S$. The tangent space $T_g\cT$ can be identified
with the space of symmetric 2-tensors on $S$ that are traceless and satisfy the
Codazzi equation for $g$. (In other terms, the real parts of holomorphic quadratic
differentials in $\cQ_{[g]}$.) We call $TT_g$ the space of those traceless 
Codazzi symmetric 2-tensors for $g$.

Let $h,k$ be two such tensors and let $[h],[k]$ be the corresponding vectors in 
$T_g\cT$. Then the Weil-Petersson metric between $[h]$ and $[k]$ can be expressed as
$$ \langle [h],[k]\rangle_{WP} = \frac 18\int_S \langle h,k\rangle_g da_g~. $$
The right-hand side of this equation is sometimes called the Fischer-Tromba metric
on $\cT$.

We can also relate the scalar product on symmetric 2-tensors to the natural
bracket between holomorphic quadratic differentials and Beltrami differentials as follows.

\begin{lemma} \label{lm:coef4}
Let $X$ be a closed Riemann surface, and let $h$ be the hyperbolic metric compatible
with its complex structure. Let $\dot h$ be a first-order deformation of $h$, and 
let $\mu$ be the corresponding Beltrami differential. Then for any holomorphic
quadratic differential $q$ on $X$,
$$ \int_X \langle Re(q), h'\rangle_h da_h = 4Re\left(\int_X q\mu\right)~. $$
\end{lemma}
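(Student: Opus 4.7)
The plan is to reduce to a pointwise calculation in local isothermal coordinates. First, observe that both sides of the identity are bilinear in $q$ and $\dot h$, and that the left-hand side depends only on the traceless part $\dot h_0$ of $\dot h$ since $\re(q)$ is traceless with respect to $h$. The right-hand side likewise depends only on $\dot h_0$, because the trace part of $\dot h$ is a conformal rescaling of $h$ and leaves the underlying complex structure---and hence $\mu$---unchanged. So I may assume that $\dot h$ is traceless.

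Write $h = e^{2\phi}|dz|^2$ in local isothermal coordinates, $q = f\, dz^2$ with $f$ holomorphic, and $\dot h = 2\re(\alpha\, dz^2)$ with $\alpha$ a complex-valued function (every real symmetric traceless $2$-tensor on a Riemann surface has this form). The central step is to express the Beltrami differential $\mu$ in terms of $\alpha$: I would compute the almost-complex structure $J_t$ of $h_t = h + t\dot h$ to first order in $t$, identify its $(0,1)$-eigendirection as a first-order perturbation of $\partial_{\bar z}$, and read off $\mu = \bar\alpha e^{-2\phi}\, d\bar z/dz$ (with the sign dictated by the orientation convention).

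With this identification in hand, the equality becomes a routine coordinate calculation. Using the formula $\langle A, B\rangle_h = e^{-4\phi}\tr(AB)$ for the inner product on symmetric $2$-tensors in the conformally flat metric $h$, a direct computation gives $\langle \re(q), \dot h\rangle_h = 4 e^{-4\phi}\re(\bar f\alpha)$, and combined with $da_h = e^{2\phi}\, dx\, dy$ the left-hand integrand becomes $4e^{-2\phi}\re(\bar f\alpha)\, dx\, dy$. On the other side, $q\mu = f\bar\alpha e^{-2\phi}\, dz\, d\bar z$, so four times the real part of $\int q\mu$ yields the same expression, after which the two sides agree and one can patch together the local charts.

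The only delicate step is the explicit identification of $\mu$ from $\dot h$, which demands careful bookkeeping of signs and factors of $2$; once it is performed, the coefficient $4$ in the statement arises naturally from combining the factor of $2$ in the normalization $\dot h = 2\re(\alpha\, dz^2)$ with the doubling that appears in the symmetric pairing on $2$-tensors.
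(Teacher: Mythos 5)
Your proposal is correct and follows essentially the same route as the paper's proof in Appendix A: a local isothermal-coordinate computation whose crux is identifying the Beltrami differential of $\dot h$ via the first-order variation of the complex structure (your formula $\mu=\bar\alpha e^{-2\phi}\,d\bar z/dz$ agrees with the paper's Statements 1--3, which derive $\mu=\tfrac12\dot J J$ and its coordinate matrix), after which the pairing computation and the factor of $4$ come out identically. The only difference is presentational -- you leave the derivation of $\mu$ from $J_t$ as a stated step rather than carrying it out, which is exactly the part the paper devotes its three preliminary statements to.
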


The proof is in Appendix \ref{sc:A}.

\subsection{The energy of harmonic maps and the Gardiner formula}

Let $f\in \cMF_S$, and let $T_f$ be its dual real tree. For each $c\in \cT_S$,
there is a unique equivariant harmonic map $u$ from $\tilde S$ to $T_f$. Let
$E_f(c)=E(u,c)$ be its energy, and let $\Phi_f$ be its Hopf differential. Then
\begin{equation}
  \label{eq:gardiner}
   dE_f(\dot c) = -4 Re(\langle\Phi_f,\dot c\rangle)~. 
\end{equation}
Here $\dot c$ is considered as a Beltrami differential, and $\langle \cdot, \cdot\rangle$ is the duality product between Beltrami differentials and holomorphic quadratic differentials. (See e.g. \cite[Theorem 1.2]{wentworth:gardiner}.)

We use below the same notations, but with $S$ replaced by $\partial M$.



\subsection{Extremal lengths of measured foliations}

Let $f$ be a measured foliation on $S$ and, for given $c\in \cT$, let $Q$ be the 
holomorphic quadratic differential on $S$ with horizontal foliation $f$. 

\begin{defi}
The {\em extremal length} of $f$ at $c$ is the integral over $S$ of $Q$,
$$ \ext_c(f) = \int_S |Q|~. $$
\end{defi}

A more classical definition can be given in terms of modulus of immersed annuli, see \cite{ahlfors:conformal}.

\begin{theorem}[\cite{wolf:realizing}] \label{tm:wolf}
$Q=-\Phi_f$. Moreover,
$$ E_f(c) = 2\int_S|\Phi_f| = 2\int_S|Q| = 2 \ext_c(f)~. $$ 
\end{theorem}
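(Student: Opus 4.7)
The plan is to split the statement into two pieces: first identify $\Phi_f$ with $-Q$, and then compute the energy.

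First I would recall that for any equivariant harmonic map $u:\tilde S\to T_f$, its Hopf differential $\Phi_f = \langle u_z, u_z\rangle\, dz^2$ (computed in a local isothermal coordinate $z$, using the metric on the tree) is holomorphic; this is the standard consequence of the harmonic map equation, which in the NPC (and in particular tree-valued) setting was established by Korevaar--Schoen and, in the form we need here, by Wolf. So $\Phi_f\in \cQ_c$.

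The heart of the argument is to identify the vertical foliation of $\Phi_f$ with $f$. Away from the zeros of $\Phi_f$, a natural coordinate $w=\int\sqrt{\Phi_f}$ can be chosen; the vertical foliation consists of level sets of $\mathrm{Re}(w)$, while in these coordinates $\Phi_f = dw^2$, so $u_w^2 = 1$ and the map $u$ has $\mathrm{Im}(w)$-lines as its level sets. Since $u$ takes values in a $1$-dimensional space, its point preimages are precisely the leaves of a measured foliation, and the transverse measure to these preimages (pulled back from the tree metric) coincides with the transverse measure $|\mathrm{Re}\,\sqrt{\Phi_f}|$ of the vertical foliation of $\Phi_f$. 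By the defining property of $T_f$ and the uniqueness of the harmonic map, this measured foliation is $f$ itself. Therefore the vertical foliation of $\Phi_f$ equals $f$. Since the horizontal foliation of $-\Phi_f$ equals the vertical foliation of $\Phi_f$, and since $Q$ is the unique element of $\cQ_c$ with horizontal foliation $f$, we conclude $Q=-\Phi_f$; in particular $|Q|=|\Phi_f|$.

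For the energy identity, in a local isothermal coordinate with $g=\sigma|dz|^2$ the energy density of $u$ decomposes as $e(u)=\mathcal{H}(u)+\mathcal{L}(u)$ with $\mathcal{H}-\mathcal{L}=|\Phi_f|/\sigma$, and because the target is $1$-dimensional the ``antiholomorphic'' energy vanishes, $\mathcal{L}(u)\equiv 0$ (equivalently, $\langle u_z,u_{\bar z}\rangle=0$ since $u$ is locally real-valued on the tree). Hence $e(u)=2|\Phi_f|/\sigma$ and integrating against $\sigma\,dA$ gives $E_f(c)=2\int_S|\Phi_f|=2\int_S|Q|=2\,\ext_c(f)$ by the definition of extremal length recalled just above.

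The main obstacle is the middle step: rigorously matching the vertical foliation of the Hopf differential with the measured foliation $f$ dual to $T_f$. This is exactly the content of Wolf's theorem and requires the careful analysis of harmonic maps into NPC spaces (existence and uniqueness of the equivariant harmonic map, regularity of the level sets and their transverse measure, and identification with $T_f$ via the universal property of the dual tree). The other ingredients—holomorphicity of $\Phi_f$, vanishing of $\mathcal{L}(u)$ for a tree target, and the tautological identity $\int|Q|=\ext_c(f)$—are then routine.
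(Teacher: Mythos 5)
This theorem is not proved in the paper at all: it is quoted verbatim from \cite{wolf:realizing}, so there is no internal proof to compare against, and what you have written is a reconstruction of Wolf's argument. Your first half --- holomorphicity of the Hopf differential, identification of the fibers of $u$ with the vertical foliation of $\Phi_f$ via the natural coordinate $w=\int\sqrt{\Phi_f}$, and the observation that the horizontal foliation of $-\Phi_f$ is the vertical foliation of $\Phi_f$ --- is the correct outline, and you rightly flag the foliation/tree identification as the genuinely hard analytic content (this is exactly what Wolf proves). Aside from the usual normalization bookkeeping (your own local computation gives $u=\pm 2\,\mathrm{Re}(w)$ when $\Phi_f=dw^2$, so the pulled-back transverse measure is $2|d\,\mathrm{Re}(w)|$ rather than $|d\,\mathrm{Re}(w)|$; one must fix conventions for the Hopf differential consistently to land on $Q=-\Phi_f$ on the nose), that part is fine.

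The energy computation, however, is wrong as written. The standard identities are $e=\mathcal{H}+\mathcal{L}$, $\mathcal{H}-\mathcal{L}=J$ (the Jacobian), and $\mathcal{H}\mathcal{L}=|\Phi|^2/\sigma^2$; your identity $\mathcal{H}-\mathcal{L}=|\Phi|/\sigma$ is not one of them. More importantly, for a tree-valued map $u$ is \emph{locally real-valued}, so $u_{\bar z}=\overline{u_z}$ and hence $\mathcal{L}=\mathcal{H}$, not $\mathcal{L}=0$; indeed $\langle u_z,u_{\bar z}\rangle=|u_z|^2\neq 0$, and if $\mathcal{L}$ vanished then $|\Phi|^2=\sigma^2\mathcal{H}\mathcal{L}$ would force $\Phi_f\equiv 0$. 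Note also that your two premises, taken together, yield $e=\mathcal{H}=|\Phi_f|/\sigma$ and hence $E=\int|\Phi_f|$, which is off by a factor of $2$ from the statement you are proving. The correct one-line argument is: the image is one-dimensional, so the Jacobian vanishes, $\mathcal{H}=\mathcal{L}=|\Phi_f|/\sigma$, and therefore $e=\mathcal{H}+\mathcal{L}=2|\Phi_f|/\sigma$, which integrates to $E_f(c)=2\int_S|\Phi_f|=2\int_S|Q|=2\,\ext_c(f)$.
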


\subsection{The renormalized volume of quasifuchsian manifolds}
\label{ssc:renormalized}

The renormalized volume of quasifuchsian manifolds is closely related to the Liouville functional in complex analysis, see \cite{takhtajan-zograf:spheres,TZ-schottky,takhtajan-teo,Krasnov:2000zq}. However it can also be considered as a special case, in dimension 3, of the renormalized volume of conformally compact Einstein manifolds as seen in \S \ref{sc:pe}, see \cite{Skenderis,graham-witten,graham}.

A definition of the renormalized volume of quasifuchsian manifolds can be found in \cite[Def 8.1]{volume}. 
It satisfies a simple variational formula, which can be written as
$$ \dot V_R = -\frac 14\int_{\partial_\infty M} \langle \II^*_0,\dot I^*\rangle
da_{I^*}~, $$
where $\II^*_0$ is the traceless part of the ``second fundamental form
at infinity'' which, together with the metric at infinity $I^*$, completely
characterizes a hyperbolic end. 

However we know from Theorem \ref{tm:12} (see \cite[Lemma 8.3]{volume}) that 
$$ \II^*_0 = \re(q)~. $$
So, applying Lemma \ref{lm:coef4} we find that that in a first-order variation, 
\begin{equation}
  \label{eq:schlafli}
 \dot V_R = - \re(\langle q,\dot c\rangle)~, 
\end{equation}
where $q$ is considered as a vector in the complex cotangent to 
$\cT_S$ at $c$, and $\langle \cdot, \cdot\rangle$ is the duality 
bracket.

\subsection{The measured foliation at infinity}

We now introduce a measured foliation at infinity, which can be thought
of as an analog at infinity of the measured bending lamination on the
boundary of the convex core.

\begin{defi}
The {\em measured foliation at infinity} is the horizontal measured
foliation of $q$. We denote it by $f$.
\end{defi}

It follows from Theorem \ref{tm:wolf} that $\Phi_{f}=-q$. 

\begin{lemma} \label{lm:critical}
Let $c\in \cT_{\partial M}$, and let $F\in \cMF_{\partial M}$. 
Then $F$ is the horizontal measured foliation of the quasifuchsian
hyperbolic metric determined by $c$ if and only if the function
$\Psi_F$ defined as
$$ \Psi_F = V_R - \frac 14 E_F: \cT_{\partial M}\to \R $$
is critical at $c$.
\end{lemma}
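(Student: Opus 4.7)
The plan is to differentiate $\Psi_F$ at $c$ by combining the two variational formulas already available in the paper. The renormalized volume satisfies \eqref{eq:schlafli}, namely $\dot V_R = -\re\langle q,\dot c\rangle$, where $q$ is the holomorphic quadratic differential at infinity of $M(c)$. The energy of the equivariant harmonic map to the dual $\mathbb{R}$-tree satisfies the Gardiner formula \eqref{eq:gardiner}, namely $dE_F(\dot c) = -4\re\langle \Phi_F,\dot c\rangle$, where $\Phi_F$ is the Hopf differential on $(\partial M,c)$. Putting these together,
\[
d\Psi_F(\dot c) \;=\; \dot V_R - \tfrac14 dE_F(\dot c) \;=\; \re\langle \Phi_F - q,\dot c\rangle.
\]

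The next step is to invoke non-degeneracy: the natural real pairing between the cotangent space (holomorphic quadratic differentials on $(\partial M,c)$) and the tangent space (Beltrami differentials modulo the trivial ones) is non-degenerate (this is the usual Teichm\"uller pairing via Serre duality, already implicit in the statement of \eqref{eq:gardiner}). Hence $d\Psi_F = 0$ at $c$ if and only if $\Phi_F = q$ as holomorphic quadratic differentials on $(\partial M,c)$.

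The final step is to translate this identity of quadratic differentials into an identity of measured foliations. By Theorem \ref{tm:wolf}, the quadratic differential $Q_F$ whose horizontal foliation is $F$ is related to the Hopf differential of the harmonic map to $T_F$ by $Q_F = -\Phi_F$. Hence the condition $\Phi_F = q$ is equivalent to the condition that $F$ be the horizontal foliation of $q$ on $(\partial M,c)$, which by definition is precisely the measured foliation at infinity $f$ associated to the quasifuchsian manifold determined by $c$.

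The argument is therefore short and essentially formal once the two variational formulas are in place. The only thing that needs care is the sign/convention bookkeeping: one has to keep straight that (i) the differential of $\ext(F)$ and the Hopf differential $\Phi_F$ are related via Wolf's theorem rather than by being equal, and (ii) the factor $-\tfrac14$ in $\Psi_F$ precisely matches the factor $-4$ in the Gardiner formula so that the two $q$ and $\Phi_F$ contributions combine cleanly. This is the only place where a genuine obstruction could arise, and it is resolved by Theorem \ref{tm:wolf}.
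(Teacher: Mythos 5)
Your argument is the paper's argument almost verbatim: differentiate $\Psi_F$ using \eqref{eq:schlafli} and \eqref{eq:gardiner}, use non-degeneracy of the pairing between holomorphic quadratic differentials and Beltrami differentials to get the converse direction, and translate back to measured foliations via Theorem \ref{tm:wolf}. The only genuine addition is that you make the non-degeneracy step explicit, where the paper just says ``the same argument as above shows''.

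There is, however, a concrete problem in your last step, precisely at the sign bookkeeping you flag as the only delicate point but then declare resolved. From \eqref{eq:schlafli} and \eqref{eq:gardiner} you correctly obtain $d\Psi_F(\dot c)=\re\langle \Phi_F-q,\dot c\rangle$, so criticality is equivalent to $\Phi_F=q$. But Theorem \ref{tm:wolf} says $Q_F=-\Phi_F$ for the quadratic differential $Q_F$ whose horizontal foliation is $F$; hence $\Phi_F=q$ gives $Q_F=-q$, i.e.\ $F$ is the horizontal foliation of $-q$ (the vertical foliation of $q$), not of $q$ as you conclude. The identity you need for the lemma as stated is $\Phi_F=-q$, which is what characterizes $F=f$. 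Note that the paper's own proof writes the derivative as $\re\langle q+\Phi_F,\dot c\rangle$ — which is what one would get from $\Psi_F=V_R+\frac14 E_F$ rather than $V_R-\frac14 E_F$ — so the sign tension is already present in the source: one sign in the chain (\eqref{eq:gardiner}, Wolf's convention, or the sign of the $E_F$ term in $\Psi_F$) must flip for everything to be consistent. The structure of your argument is fine and would survive any consistent choice of conventions, since criticality pins down $\Phi_F$ up to a universal sign and hence pins down $F$; but as written, your final equivalence does not follow from the formulas you quote, and asserting that it is ``resolved by Theorem \ref{tm:wolf}'' papers over exactly the discrepancy that needs to be tracked down.
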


\begin{proof}
  Suppose first that $F$ is the horizontal measured foliation of $q$, the holomorphic quadratic differential at infinity of the quasifuchsian manifold $M(c)$. 

It follows from \eqref{eq:gardiner} and \eqref{eq:schlafli} that, in a 
first-order variation $\dot c$, 
$$ d\Psi_F(\dot c) = dV_R(\dot c) - \frac 14 dE_F(\dot c) = \re(\langle q+\Phi_F, \dot c\rangle)~. $$
But we have seen that $q=-\Phi_F$, and it follows that $d\Psi_F=0$.

Conversely, if $d\Psi_F=0$, the same argument as above shows that $q=-\Phi_F$, so that $F$ is the horizontal measured foliation of $q$.
\end{proof}




\subsection{Proof of Theorem \ref{tm:schlafli}}

Equation \eqref{eq:schlafli} states that, in a first-order deformation of $M$,
$$ \dot V_R = -\re(\langle q, \dot c\rangle )~, $$
and using Theorem \ref{tm:wolf} we obtain that
$$ \dot V_R = \re(\langle \Phi_f, \dot c\rangle )~. $$
Using \eqref{eq:gardiner}, this can be written as
$$ \dot V_R = - \frac 14 dE_f(\dot c)~. $$
Using Theorem \ref{tm:wolf} again, we finally find that
$$ \dot V_R = - \frac 12 (d\ext(f))(\dot c)~. $$

\subsection{Proof of Theorem \ref{tm:ext}} \label{ssc:ext}

Nehari \cite{nehari-bams} proved that if $f:D\to \C$ is a univalent holomorphic function defined on the unit disk, then its Schwarzian derivative can be written as $\cS(f)=g dz^2$ with
$$ \frac {|g|}\rho \leq \frac 32~, $$
where $\rho |dz|^2$ is the complete hyperbolic metric on the disk $D$.

As a consequence,
$$ \int_{\partial_\pm M} |q| \leq \int_{\partial_\pm M} \frac 32 da_{h_\pm}~, $$
where $da_{h_\pm}$ is the area form of the hyperbolic metric $h_\pm$ in the conformal class at infinity.
Since the area of $(\partial_\pm M, h_\pm)$ is $2\pi|\chi(S)|$, the result follows.

\subsection{Questions}
\label{ssc:questions}

We list here a number of questions motivated by the correspondence the boundary of the convex core and the boundary at infinity.

\begin{question}
Can Theorem \ref{tm:ext} and Theorem \ref{tm:schlafli} be extended to convex co-compact hyperbolic 3-manifolds? To geometrically finite hyperbolic 3-manifolds?
\end{question}

The definition of the renormalized volume can be extended to convex co-compact hyperbolic manifolds, and the main estimates also apply for convex co-compact manifolds with incompressible boundary, see \cite{bridgeman-canary:renormalized}. We can expect Theorem \ref{tm:schlafli} to apply to convex co-compact hyperbolic manifolds, and Theorem \ref{tm:ext} to extend to convex co-compact hyperbolic manifolds with incompressible boundary, while the estimate for manifolds with compressible boundary might involve the injectivity radius of the boundary.

\begin{question}
Can Theorem \ref{tm:ext} and Theorem \ref{tm:schlafli} be extended to geometrically finite hyperbolic 3-manifolds?
\end{question}

Again, the definition and some key properties of the renormalized volume extend to geometrically finite hyperbolic 3-manifolds, see \cite{guillarmou-moroianu-rochon}. It could be expected that Theorems \ref{tm:schlafli} and \ref{tm:ext} extends to this setting. 

\begin{question} \label{q:fill}
Suppose that $M$ is not Fuchsian (that is, it does not contain any closed totally geodesic surface). Do $f_-$ and $f_+$ fill?   
\end{question}

This would be the analog of the well-known (and relatively easy) corresponding statement for $l_-$ and $l_+$, the measured bending lamination on the boundary of the convex core.

\begin{question}
Let $(f_-,f_+)\in \cML_S\times \cML_S$, $(f_-, f_+)\neq 0$. Is there at most one quasifuchsian manifold with measured foliation at infinity $(f_-, f_+)$? 
\end{question}

This is the analog at infinity of the uniqueness part of a conjecture of Thurston on the existence and uniqueness of a quasifuchsian manifold having given measured bending lamination $(l_-, l_+)$ on the boundary of the convex core. In this case $(l_-, l_+)$ are requested to fill and to have no closed leaf of weight larger than $\pi$. The existence part of this conjecture was proved in \cite{bonahon-otal}, as well as the uniqueness for rational measured laminations.

A related question would be whether {\em infinitesimal} rigidity holds, that is, whether any non-zero first-order deformation of $M$ induces a non-zero deformation of either the $f_-$ or $f_+$. The analog question for $l_-$ and $l_+$ is open.

\begin{question}
Given $(f_-,f_+)\in \cML_S\times \cML_S$, what conditions should it satisfy so that there exists a quasifuchsian manifold $M$ with measured foliation at infinity $(f_-, f_+)$? 
\end{question}

If the answer to Question \ref{q:fill} is positive, then one should ask that (if $(f_-,f_+)\neq 0$) $f_-$ and $f_+$ should fill. However other conditions might be necessary.

\begin{question}
Are there any extensions of the measured foliation at infinity in higher dimension, for quasifuchsian (or convex co-compact) hyperbolic $d$-dimensional manifolds? 
\end{question}

For those manifolds, there is a well-defined notion of convex core, and the boundary of the convex core also has a ``pleating''. However the pleating lamination might have a more complex structure than for $d=3$, with codimension $1$ ``pleating hypersurfaces'' of the boundary meeting along singular strata of higher codimension. Other aspects of the renormalized volume of quasifuchsian manifold have a partial extension in higher dimensions, see e.g. \cite{renormvol}.

\begin{question}
Is the renormalized volume convex in any reasonable sense?  
\end{question}

It seems unlikely that the renormalized volume is convex for the Weil-Petersson metric, since this does not seem to be compatible with the behavior of its gradient close to the Weil-Petersson boundary of $\cT_{\partial M}$, see \cite{bridgeman-brock-bromberg}. However the renormalized volume is convex in the neighborhood of the Fuchsian locus, see \cite{moroianu-convexity,ciobotaru-moroianu,vargas-pallete-local}.  

Note that it has been proved recently that the renormalized volume is minimal at the Fuchsian locus (for quasifuchsian manifolds) and for metrics containing a convex core with totally geodesic boundary (for acylindrical boundary), see \cite{vargas-pallete-continuity,bridgeman-brock-bromberg}. 

\section{The second fundamental form at infinity and the
space of horospheres}
\label{sc:horo}

\subsection{Proof of Theorem \ref{tm:24}}
\label{ssc:24}

After replacing $I^*$ by $e^{2r}I^*$, for $r$ large enough, the Epstein
surface of $I^*$ is smooth and embedded. We will suppose that this is the
case, since the general case then follows by scaling.

Let $S$ be the Epstein surface of $I^*$, that is, $S$ is a surface in 
$E$ such that the hyperbolic Gauss map $G$ of $S$ is a diffeomorphism
between $S$ and $\partial_\infty E$, and the pull-back $G^*I^*$ is equal
to $\frac 12(I+2\II+\III)$ (see \cite[Definition 5.3]{volume}). 
We can then consider the dual surface
$S^*$ in $C^3_+$. Its induced metric is equal to $I^*_c=I+2\II+\III$
under the identification between $S$ and $S^*$ through the duality
map (see \cite[Lemma 3.5]{horo}). Note that the duality map followed
by the projection in $C^3_+$ along the vertical (degenerate) direction
is equal to the hyperbolic Gauss map, and we therefore obtain that 
$I^*_c=I+2\II+\III=2I^*$.

We denote by $B^*$ and $B^*_c$ the ``shape operators'' associated
to $\II^*$ and $\II^*_c$, respectively. That is, $B^*$ and $B^*_c$
are self-adjoint with respect to $I^*$ and $I^*_c$, and 
$$ \II^*=I^*(B^*\cdot, \cdot)~,~~ \II^*_c=I^*_c(B^*_c\cdot, \cdot)~. $$

We also know that $B^*_c=(E+B)^{-1}$ (see \cite[Lemma 5.5]{horo}), 
while $B^*=(E+B)^{-1}(E-B)$ (see \cite[Eq. (31)]{volume}. So
$E+B^*=2(E+B)^{-1}=2B^*_c$, and the result follows.

\subsection{Proof of Theorem \ref{tm:12}}
\label{ssc:12}

We now turn to the proof of Theorem \ref{tm:12}, but will use Corollary \ref{cr:h2},
which is proved in the next section.

We consider the Riemann uniformization map $\phi:\widetilde{\partial_\infty E}\to D$,
where $D\subset \C$ is the disk. By definition $\phi$ is a conformal diffeomorphism.
The following metrics can be considered:
\begin{itemize}
\item on the domain $\widetilde{\partial_\infty E}$, the restriction of either a 
spherical metric $h_S$ on $\C P^1$, or a flat metric $|dz|^2$ on $\C P^1$ minus
a point,
\item on the target $D$, either the hyperbolic metric $h_D$, or the restriction
of a flat metric $|dz|^2$ defined on $\C$.
\end{itemize}
Given a metric $h$ on $\widetilde{\partial_\infty E}$, we denote by $\II^*_h$ the
second fundamental form at infinity obtained when taking $I^*=h$, and similarly
for $\II^*_{c,h}$ and for their traceless components.

The spherical metric $h_S$ corresponds to a totally geodesic surface in 
$C^3_+$, so $\II^*_{c,h_S}=0$. It follows from Theorem \ref{tm:24} that 
the traceless part $\II^*_{h_S,0}=0$. Applying \eqref{eq:confII*}
then shows that 
$$ \II^*_{\phi^*h_D, 0} = B(h_S,\phi^*h_D)~. $$
It follows that
\begin{eqnarray*}
  \II^*_{\phi^*h_D, 0} 
& = & B(|dz|^2, h_S) + B(h_S,\phi^*h_D) + \phi^*B(h_D,|dz|^2)
\end{eqnarray*}
because the first and third term on the right-hand side are zero. Thus
\begin{eqnarray*}
  \II^*_{\phi^*h_D, 0} & = & B(|dz|^2, \phi^*|dz|^2)~.
\end{eqnarray*}
It now follows from \eqref{eq:schwarzian} that 
$$ \II^*_{\phi^*h_D, 0} = \re(\cS(\phi))~, $$
as claimed.

\section{Poincaré-Einstein ends}
\label{sc:pe}

We now turn to the proof of Theorem \ref{tm:h2}. Suppose first that $d\geq 3$. 
Then the second term in the asymptotic development of $h_x$ near infinity
has a simple expression in terms of $h_0$, see \cite[(3.18)]{fefferman-graham2}.

\begin{prop}[Fefferman, Graham]
$h_2$ is minus the Schouten tensor of $h_0$, see $h_2=-\sch_{h_0}$.
\end{prop}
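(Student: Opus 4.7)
The plan is to impose the Einstein condition $\ric(g) = -d\cdot g$ on $g = x^{-2}(dx^2+h_x)$ and to read off the $O(x^0)$ coefficient of the tangential part of that equation, which will determine $h_2$ completely in terms of $h_0$.

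First, I would apply the conformal transformation formula for the Ricci tensor. Writing $g = e^{2\phi}\bar g$ with $\bar g = dx^2 + h_x$ and $\phi = -\log x$, the standard identity in dimension $n = d+1$ gives
$$ \ric(g) = \ric(\bar g) - (n-2)\bigl(\hess_{\bar g}\phi - d\phi\otimes d\phi\bigr) - \bigl(\Delta_{\bar g}\phi + (n-2)\|d\phi\|^2_{\bar g}\bigr)\bar g~. $$
Since $d\phi = -x^{-1}dx$ and the $\bar g$-gradient of $\phi$ is $-x^{-1}\partial_x$, every $\phi$-derivative on the right-hand side reduces to an explicit power of $x^{-1}$ times components of $h_x$ and its $x$-derivatives.

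Next, I would compute $\ric(\bar g)$ for the warped-product metric $\bar g = dx^2 + h_x$ using the standard Gauss/Codazzi-type decomposition: the slices $\{x = \text{const}\}$ carry the induced metric $h_x$, their ``second fundamental form'' is $\tfrac12 \partial_x h_x$, and $\ric(\bar g)$ splits into the intrinsic piece $\ric(h_x)$ plus extrinsic contributions involving $\partial_x h_x$ and $\partial_x^2 h_x$.

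Then I would substitute the even expansion $h_x = h_0 + x^2 h_2 + O(x^4)$ into $\ric(g) + d\cdot g = 0$. The orders $x^{-2}$ and $x^{-1}$ vanish identically (the former from the hyperbolic normalization, the latter because $h_x$ has no $x^1$ term), and the $O(x^0)$ coefficient of the tangential component yields, up to sign conventions, an identity of the shape
$$ (d-2)\, h_2 + (\tr_{h_0} h_2)\, h_0 + \ric(h_0) = 0~. $$
Taking its $h_0$-trace gives $(2d-2)\tr_{h_0} h_2 + \scal(h_0) = 0$, hence $\tr_{h_0} h_2 = -\scal(h_0)/(2(d-1))$. Substituting back in, one obtains
$$ h_2 = -\frac{1}{d-2}\Bigl(\ric(h_0) - \frac{\scal(h_0)}{2(d-1)}\, h_0\Bigr) = -\sch_{h_0}~, $$
which is the claim.

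The main obstacle is the bookkeeping in the first two steps: both the conformal change formula and the warped-product decomposition of $\ric(\bar g)$ produce several tensorial terms whose signs, traces, and powers of $x$ must be tracked carefully to extract precisely the $O(x^0)$ coefficient. Once that coefficient is isolated, the remainder is purely algebraic, and the assumption $d\geq 3$ enters only in that the factor $(d-2)$ may be divided out. The higher-order terms $h_4, \ldots, h_{d-1}$ arise from the same matching at higher powers of $x$ but play no role in the formula for $h_2$.
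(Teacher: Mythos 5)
Your argument is correct in outline, but it is worth noting that the paper does not prove this proposition at all: it simply cites \cite[(3.18)]{fefferman-graham2}, so what you have written is essentially a reconstruction of the Fefferman--Graham derivation that the paper imports as a black box. Your strategy (conformal change $g=x^{-2}\bar g$, warped-product decomposition of $\ric(\bar g)$ along the slices $\{x=\mathrm{const}\}$, matching the tangential $O(x^0)$ coefficient of $\ric(g)+d\,g=0$) is the standard route, and the final algebra is right: tracing $(d-2)h_2+(\tr_{h_0}h_2)h_0+\ric(h_0)=0$ against $h_0$ gives $\tr_{h_0}h_2=-\scal(h_0)/(2(d-1))$, and substituting back yields $h_2=-\sch_{h_0}$ with the paper's normalization of the Schouten tensor (where the $n$ in the paper's formula should be read as $d$). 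The one genuinely thin spot is that the entire content of the proposition sits in the order-$x^0$ identity, which you state only ``up to sign conventions'': the sign of the $\ric(h_0)$ term relative to $(d-2)h_2$ is exactly what decides between $h_2=-\sch_{h_0}$ and $h_2=+\sch_{h_0}$, so to make this a complete proof you would need to actually carry out the warped-product Ricci computation (second fundamental form $\tfrac12\partial_xh_x$ of the slices, its $x$-derivative, and the trace terms) rather than defer it to bookkeeping. Two smaller remarks: the vanishing of the $x^{-1}$ coefficient is not an independent check but is equivalent to the absence of an $x^1$ term in $h_x$, which the paper's stated expansion already grants you; and $d\geq3$ enters only through dividing by $d-2$, as you say, which is consistent with the paper treating $d=2$ by an entirely different argument (via surfaces in the space of horospheres).
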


Recall that the Schouten tensor is defined as 
$$ \sch_h=\frac 1{d-2}\left(
\ric_{h_0} - \frac 1{2(n-1)}\scal_{h_0}h_0
\right)~. $$
Moreover, the Schouten tensor obeys the following transformation
under a conformal transformation of the metric, see \cite[(1.159)]{Be}:
$$ \sch_{e^{2u}h} = \sch_h - \hess(u) + du\otimes du -
\frac 12 \| du\|_h h~. $$
Theorem \ref{tm:h2} follows for $d\geq 3$.

We now focus on $d=2$. We have seen in Proposition \ref{pr:Ih} and 
Theorem \ref{tm:24} that $h_2=\II^*=\II^*_c-I^*$, so it is sufficient
to understand the variation of $\II^*_c$ in a conformal variation
of $I^*$. We will prove first an infinitesimal version of
Equation \eqref{eq:confII*c}, and obtain the general result as a consequence.

\begin{lemma} \label{lm:II*}
Let $(S,g)$ be a surface with a Riemannian metric, and let 
$\phi:\St\to S^2$ be a conformal diffeomorphism. For each 
$u:S\to \R$, let $\Phi_u:\St\to C^3_+$ be the isometric 
embedding such that $\pi\circ \Phi_u=\phi$, and let $\II^*_c(u)$
denote the pull-back by $\Phi_u$ of the second fundamental form
of the image by $\Phi_u$. Then the differential of $\II^*_c(u)$
corresponding to a first-order variation $\dot u$ is
$$ \dot{\II^*_c} = \hess_{I^*_c}(\dot u) + \dot uI^*_c~. $$
\end{lemma}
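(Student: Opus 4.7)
The plan is to derive an explicit formula for $\II^*_c$ in terms of the graph height of $\Phi_u(\St)$ in $C^3_+$, and then linearize in $u$.

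Since $\pi\circ\Phi_u=\phi$ does not depend on $u$, we can write $\Phi_u(x)=(\phi(x),t_u(x))$ in the product decomposition $C^3_+=S^2\times\R$, and the isometry condition fixes $t_u$ so that the induced metric $I^*_c(u)=e^{2t_u}\phi^*g_{S^2}$ matches the prescribed intrinsic metric on $\St$. At first order, $\dot t_u=\dot u$, and $\dot\Phi_u=\dot u\,\partial_t$ is a vertical vector field, tangent to the kernel of the ambient degenerate metric.

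The key step is an explicit formula for $\II^*_c$ on a graph $t=T(y)$, with $\bar g:=\phi^*g_{S^2}$. One obtains
\[
\II^*_c(T) \;=\; \hess_{\bar g}(T)-dT\otimes dT+\tfrac 12\|dT\|^2_{\bar g}\,\bar g+\tfrac 12(e^{2T}-1)\,\bar g,
\]
either by direct computation in $C^3_+$ using the adapted connection of \cite[\S 2]{horo}, or via duality with the Epstein surface in $H^3$ (Theorem~\ref{tm:24}), identifying $\II^*_c=\II^*+I^*$ and computing $\II^*$ from the Epstein construction for the metric $e^{2T}\bar g/2$ at infinity. Two checks: the right-hand side vanishes at $T\equiv 0$, consistent with the totally geodesic round sphere in $C^3_+$; and tracing with $I^*_c=e^{2T}\bar g$ gives $e^{-2T}\Delta_{\bar g}T+1-e^{-2T}=1-K_{I^*_c}$, matching the modified Gauss equation $K=1-\tr(B^*_c)$ of \cite[\S 6]{horo}.

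Linearizing by $T\mapsto T+\epsilon\dot u$ around an arbitrary basepoint gives at first order
\[
\dot{\II^*_c} \;=\; \hess_{\bar g}(\dot u)-dT\otimes d\dot u-d\dot u\otimes dT+\langle dT,d\dot u\rangle_{\bar g}\,\bar g+\dot u\,e^{2T}\bar g,
\]
since the $dT\otimes dT$ term contributes only at order $\epsilon^2$. The standard conformal-change formula
\[
\hess_{e^{2T}\bar g}(\dot u) \;=\; \hess_{\bar g}(\dot u)-dT\otimes d\dot u-d\dot u\otimes dT+\langle dT,d\dot u\rangle_{\bar g}\,\bar g
\]
then collects the first four terms into $\hess_{I^*_c}(\dot u)$, and the last term is $\dot u\,I^*_c$, yielding the claimed identity.

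The main obstacle is the derivation of the closed-form expression for $\II^*_c(T)$. The Epstein-dual route requires explicit control of the shape operator $B^*=(E-B)(E+B)^{-1}$ of the Epstein surface in terms of $T$; the direct route in $C^3_+$ requires careful use of the adapted connection on the degenerate ambient space. Either is concrete but nontrivial.
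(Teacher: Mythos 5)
Your linearization step is correct, and the closed-form expression you differentiate,
$$ \II^*_c(T) \;=\; \hess_{\phi^*g_{S^2}}(T) - dT\otimes dT + \tfrac 12\|dT\|^2_{\phi^*g_{S^2}}\,\phi^*g_{S^2} + \tfrac 12(e^{2T}-1)\,\phi^*g_{S^2}~, $$
is indeed true: it is Lemma \ref{lm:II*c} specialized to the round base metric, for which $\II^*_{c,h}=0$. But that formula is precisely the nontrivial content here, and you leave its derivation as ``concrete but nontrivial'' --- that is a genuine gap, not a detail. Moreover, both routes you sketch for it are problematic within the logical structure of the paper: Lemma \ref{lm:II*c} is itself obtained by \emph{integrating} the present infinitesimal lemma, so invoking it would be circular; and the Epstein-duality route via Theorem \ref{tm:24} needs the conformal transformation rule \eqref{eq:confII*} for $\II^*$, which the paper also deduces from this lemma, so to make that route non-circular you would have to compute the shape operator of the Epstein surface of $e^{2T}\phi^*g_{S^2}$ from scratch. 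Your two sanity checks (vanishing at $T\equiv 0$, compatibility with $K=1-\tr B^*_c$) are supporting evidence only; in particular the trace check constrains only the pure-trace part of the candidate formula.

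The paper closes this gap with a purely local, infinitesimal argument that never needs a global graph formula. Fix $x_0$ and let $P_0$ be the totally geodesic plane of $C^3_+$ tangent to $\Phi_u(S)$ at $y_0=\Phi_u(x_0)$; by \cite[Lemma 5.2]{horo}, $\II^*_c$ at $y_0$ equals the Hessian at $y_0$ of the height function of $\Phi_u(S)$ written as a graph over $P_0$ (a function vanishing to first order at $y_0$). Under the variation $\dot u$ one subtracts the unique first-order vertical deformation $\dot u_0$ of $P_0$ \emph{through totally geodesic planes} that remains tangent to the deformed surface at the point corresponding to $y_0$: since totally geodesic planes correspond to the round constant-curvature conformal factors on $S^2$, this $\dot u_0$ is characterized by $\hess_{P_0}(\dot u_0)+\dot u_0\,h_{P_0}=0$ together with $\dot u_0(y_0)=\dot u(y_0)$ and $d\dot u_0(y_0)=d\dot u(y_0)$. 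Hence at $y_0$,
$$ \dot{\II^*_c}=\hess_{P_0}(\dot u-\dot u_0)=\hess_{P_0}(\dot u)+\dot u_0\,h_{P_0}=\hess_{I^*_c}(\dot u)+\dot u\,I^*_c~, $$
using that $h_{P_0}$ agrees with $I^*_c$ on the common tangent plane and that Hessians for the two metrics agree at a point where the function and its differential are prescribed. If you prefer to keep your strategy, you must actually carry out the direct computation of $\II^*_c$ for a graph in $C^3_+$ with the adapted connection of \cite[\S 2]{horo}; as written, the central formula is asserted rather than proved.
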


\begin{proof}
Let $x_0\in S$, and let $P_0$ be the tangent plane to $\Phi_u(S)$
at $y_0=\Phi_u(x_0)$. Then the second fundamental form $\II^*_c$ of 
$\Phi_u(S)$ at $y_0$ can be defined (see \cite[Lemma 5.2]{horo}) as
the Hessian at $y_0$ of the function $v$ defined on 
$P_0$ such that $\Phi_u(S)$ is the graph of $v$ over $P_0$.
By definition, $v(y_0)=0$ and $dv(y_0)=0$.

Now consider a first-order variation $\dot u$ of $u$, and let 
$\dot u_0$ be the first-order vertical deformation of $P_0$, 
among totally geodesic planes in $C^3_+$, chosen such that 
the deformed totally geodesic plane remains tangent to 
the first-order deformation of $\Phi_u(S)$ at the point
corresponding to $y_0$. Since totally geodesic planes in 
$C^3_+$ correspond to constant curvature conformal metrics
on $S^2$, $\dot u_0$ is uniquely determined by the condition
that 
$$ \dot u_0(y_0)=\dot u(y_0)~, ~~ d\dot u_0(y_0)=d\dot u(y_0)~, $$
$$ \hess_{P_0}(\dot u_0)+\dot u_0 h_0=0~, $$
where $h_{P_0}$ is the induced metric on $P_0$.

Therefore, we have at $y_0$
\begin{eqnarray*}
\dot{\II^*_c} & = & \hess_{P_0}(\dot u-\dot u_0) \\
& = & \hess_{P_0}(\dot u) - \hess(\dot u_0) \\
& = & \hess_{P_0}(\dot u) + \dot u_0 h_{P_0} \\
& = & \hess_{I^*_c}(\dot u) + u I^*_c~,
\end{eqnarray*}
where the last equality follows from the fact that $h_0$ is equal to 
$I^*_c$ on $T_{y_0}\Phi_u(S)=T_{y_0}P_0$ and because $u(y_0)=du(y_0)=0$
so that the Hessian at $y_0$ for $I^*_c$ is the same as the Hessian at $y_0$
for $h_{P_0}$.
\end{proof}

We can then integrate this first-order variation formula, to obtain 
Equation \eqref{eq:confII*c}.

\begin{lemma} \label{lm:II*c}
Let $\Omega\subset \C P^1$, let $h$ be a Riemannian metric on $\Omega$ compatible
with the complex structure, and let $u:\Omega\to \R$ be a smooth function. 
Let $\II^*_{c,h}$ and $\II^*_{c,e^{2u}h}$ be the second fundamental forms of the 
isometric embeddings in $C^3_+$ of $h$ and $e^{2u}h$, respectively. Then 
$$ \II^*_{c,e^{2u}h} = \II^*_{c,h} + \hess_h(u) - du\otimes du + \frac 12
\| du\|^2_h h + \frac 12(e^{2u}-1)h~. $$
\end{lemma}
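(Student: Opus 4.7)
The plan is to promote the infinitesimal formula of Lemma \ref{lm:II*} to a finite conformal change by integrating along the straight-line path of conformal factors $u_t = tu$, $t\in[0,1]$, connecting $h$ to $e^{2u}h$. Setting $h_t = e^{2tu}h$, one has $I^*_c(t) = h_t$ and $\dot u_t = u$, so Lemma \ref{lm:II*} gives
$$\frac{d}{dt}\II^*_c(t) = \hess_{h_t}(u) + u\, h_t.$$

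To integrate this, I would next rewrite $\hess_{h_t}(u)$ in terms of quantities involving only $h$ using the standard conformal change of Hessian: under $g \mapsto e^{2v}g$,
$$\hess_{e^{2v}g}(u) = \hess_g(u) - dv\otimes du - du\otimes dv + g(\nabla u,\nabla v)\, g,$$
which specialized to $v = tu$ and $g=h$ gives $\hess_{h_t}(u) = \hess_h(u) - 2t\, du\otimes du + t\|du\|_h^2\, h$. Substituting this into the evolution equation and integrating each summand separately over $t\in[0,1]$ (using in particular $\int_0^1 u e^{2tu}\,dt = \frac{1}{2}(e^{2u}-1)$) produces exactly the four terms on the right-hand side of the claim.

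The main difficulty is really just bookkeeping rather than anything conceptual: one has to keep straight that the induced metric appearing in Lemma \ref{lm:II*} is $h_t$ itself (not $2h_t$), which is what produces the coefficient $\frac{1}{2}$ in front of $(e^{2u}-1)h$, and one has to apply the conformal change of the Hessian correctly. Beyond these elementary checks, no new geometric input is needed, as the argument is a pure one-parameter integration along the smooth family of isometric embeddings $\Phi_{u_t}:\Omega\to C^3_+$ supplied by Lemma \ref{lm:II*}.
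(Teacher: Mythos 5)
Your proposal is correct and follows essentially the same route as the paper: both derive the first-order variation from Lemma \ref{lm:II*} via the conformal transformation rule for the Hessian and then integrate along the path $t\mapsto e^{2tu}h$, with the same elementary integrals producing the four terms. No further comment is needed.
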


\begin{proof}
Recall the conformal transformation rule for the Hessian: for any functions
$u,v:\Omega\to \R$, 
$$ \hess_{e^{2u}h}(v) = \hess_h(v) - 2du\otimes dv + \langle du, dv\rangle_h h~. $$
This follows from the conformal transformation of the Levi-Civita connection, see
\cite[(1.159 a)]{Be}.

This leads to a first-order variation formula for $\II^*_{c,e^{2u}h}$ under a 
first-order variation of $h$, based on the previous lemma.
\begin{eqnarray*}
d\II^*_{c,e^{2u}h}(\dot u) & = & \hess_{e^{2u}h}\dot u + \dot u e^{2u}h \\
& = & \hess_h \dot u - 2 du\otimes d\dot u + \langle du,d\dot u\rangle_{e^{2u}h}
e^{2u}h + \dot u e^{2u}h \\
& = & \hess_h \dot u - 2 du\otimes d\dot u + \langle du,d\dot u\rangle_{h}
h + \dot u e^{2u}h~,
\end{eqnarray*}
and the result follows by integration:
\begin{eqnarray*}
\II^*_{c,e^{2u}h}-\II^*_{c,h} & = & 
\int_{t=0}^1 d\II^*_{c,e^{2tu}h}(u) dt \\
& = & 
\hess_h(u) - du\otimes du + \frac 12 \langle du,du\rangle_h h +
\frac 12(e^{2u}-1) h~. 
\end{eqnarray*}
\end{proof}

Lemma \ref{lm:II*c} is equivalent to Equation \eqref{eq:confII*c}. Equation
\eqref{eq:confII*} then follows by Theorem \ref{tm:24}. We can then use
Proposition \ref{pr:Ih} to prove Theorem \ref{tm:h2} in dimension $d=2$.

\medskip

Sergiu Moroianu has suggested another proof of Theorem \ref{tm:h2}, that works
both for $d=2$ and for higher dimensions. It is perhaps conceptually simpler 
but computationally a bit more involved. 

\section{Linear Weingarten surfaces and Monge-Ampère equations}
\label{sc:weingarten}

\subsection{Tame hypersurfaces}

We consider a hypersurface $S\subset H^{d+1}$, and use the same notations
$I, \II,\III,B$ as above. The corresponding data at infinity are
determined by $I^*=\frac 12 (I+2\II+\III), B^*=(E+B)^{-1}(E-B)$, and therefore
\begin{equation}
  \label{eq:BB*}
   B = (E+B^*)^{-1}(E-B^*)~. 
\end{equation}
\footnote{Hypothesis necessary on $B$, no eigenvalue $1$?}
The proof of Proposition \ref{pr:tame} is a direct consequence of 
this equation, since the eigenvalues of $B$ are in$(-1,1)$ if and
only if the eigenvalues of $B^*$ are positive.

\subsection{Relations on the data at infinity}
\label{ssc:weingarten}

A simple computation using \eqref{eq:BB*} shows that 
$$ \det(B)=\frac{\det(B^*)-\tr(B^*)+1}{\det(B^*)+\tr(B^*)+1}~,~~
\tr(B)=2\frac{1-\det(B^*)}{\det(B^*)+\tr(B^*)+1}~. $$
Therefore, $S$ satisfies the Weingarten equation \eqref{eq:weingarten},
$aK_e+bH+c=0$, if and only if
$$ a(\det(B^*)-\tr(B^*)+1) +b (1-\det(B^*))+c(\det(B^*)+\tr(B^*)+1)=0~, $$
so if and only if
$$ (a-b+c)\det(B^*) + (c-a)\tr(B^*)+(a+b+c)=0~. $$
This is the case if and only if
$$ \det((a-b+c)B^*+(c-a)E) - (c-a)^2 +(a-b+c)(a+b+c)=0, $$
that is, if and only if
$$ \det((a-b+c)B^*+(c-a)E)=b^2-4ac~. $$
This proves Proposition \ref{pr:weingarten-infinity}.

\subsection{Monge-Amp\`ere equations}

We now turn to the proof of Proposition \ref{pr:2cond}. We set 
$\bar I^*=e^{2u}I^*$, and denote by $\bar{\II^*}$ the second
fundamental form at infinity associated to $\bar I^*$. We
have seen that
$$ \bar{\II^*} = \II^* +B(I^*,\bar I^*)~. $$
The surface corresponding to $e^{2u}I^*$ is h-tame
if and only if $\bar{\II^*}$ is positive definite, that is,
if and only if $\II^*+B(I^*,\bar I^*)$ is positive definite.

Moreover, 
$$ \det_{\bar I^*}((a-b+c)\bar{\II^*}+(c-a)\bar I^*) = 
\det_{e^{2u}I^*}((a-b+c)(\II^* +B(I^*,\bar I^*))+(c-a)e^{2u}I^*)$$
$$ = e^{-4u} \det_{I^*}((a-b+c)(\II^* +\hess(u) - du\otimes du +
\frac 12 \| du\|_{I^*}I^*)+(c-a)e^{2u}I^*)~. $$
The second point of Proposition \ref{pr:2cond} therefore
follows from Proposition \ref{pr:weingarten-infinity}.


\appendix

\section{Proof of Lemma \ref{lm:coef4}}
\label{sc:A}

We denote by $u:TS\to TS$ the $h$-self-adjoint morphism such that $\dot h=h(u\cdot, \cdot)$,
and suppose that $u$ is traceless. 
We also denote by $J$ the complex structure of $h$, by $\dot J$ the first-order variation 
of $J$.

\begin{stat}
$u=\dot J J=-J\dot J$. Note also that, since $u$ is traceless, $\dot J$ is self-adjoint.
\end{stat}

\begin{proof}
Note that $J^2=-I$ so $J\Jd+\Jd J=0$. 

To prove the statement we have to check that, with this choice of $u$, the following
two defining properties of $J$ remain valid at first order:
$$ h(Jx,y)=-h(x,Jy)~, $$
$$ h(Jx,Jy)=h(x,y)~. $$
This translates as
$$ \hd(Jx,y)+h(\Jd x,y) +\hd(x,Jy)+h(x,\Jd y)=0~, $$
$$ \hd(Jx,Jy)+h(\Jd x,Jy)+h(Jx,\Jd y)=\hd(x,y)~, $$
or in equivalent terms:
$$ h(\Jd JJx,y)+h(\Jd x,y) +h(x,\Jd JJy)+h(x,\Jd y)=0~, $$
$$ h(\Jd JJx,Jy)+h(\Jd x,Jy)+h(Jx,\Jd y)=h(\Jd Jx,y)~, $$
and both equations are clearly satisfied, the second because
$\Jd$ is self-adjoint.
\end{proof}

\begin{stat}
The Beltrami differential associated to $\Jd$ can be written as an antilinear
morphism $\mu:TS\to TS$ as 
$$ \mu = \frac 12 \Jd J = -\frac 12 J\Jd~. $$
\end{stat}

\begin{proof}
Let $(J_t)_{t\in [0,1]}$ be a one-parameter family of complex structures, with 
$J_0=J$ and $(d/dt)J_t=\Jd$ at $t=0$. Then the Beltrami differential associated
to the identity map from $(X,J)$ to $(X,J_t)$ is
$$ \mu_t = (\partial id)^{-1}\circ (\bar\partial id) = \left(\frac{id-J_tJ}2\right)^{-1}
\circ \left(\frac{id+J_tJ}2\right) = (id-J_tJ)^{-1}\circ (id+J_tJ)~. $$
Differentiating this at $t=0$ shows the result.
\end{proof}

\begin{stat}
Let $z=x+iy$ be a complex coordinate. If the matrix of $u$ in the basis 
$(\partial_x,\partial_y)$ is 
$$ \left(\begin{array}{cc}
  a & b\\ b & -a
\end{array}\right)~, $$
then 
$$ \mu=\frac{a+ib}2 \frac{d\bar z}{dz}~. $$
\end{stat}

\begin{proof}
Follows from checking explicitly that this expression leads to the
correct matrix for $\mu$ seen as an antilinear morphism $TS\to TS$.  
\end{proof}

\begin{proof}[Proof of the lemma]
Write $\mu=(\mu_x+i\mu_y)d\bar z/dz, q=(q_x+iq_y)dz^2, h=\rho^2 |dz|^2$. 
Then the matrix of $u$ is 
$$ \left(\begin{array}{cc}
  2\mu_x & 2\mu_y\\ 2\mu_y & -2\mu_x
\end{array}\right)~, $$
so 
$$ \hd = (2\mu_x(dx^2-dy^2) +4\mu_y dx dy)\rho^2~. $$
On the other hand, $Re(q)=q_x(dx^2-dy^2)-2q_y dxdy$. So
$$ \langle \hd, Re(q)\rangle_h = \frac{4\mu_x q_x - 4\mu_y q_y}{\rho^2}~, $$
and
$$ \int_X \langle \hd, Re(q)\rangle_h da_h = 
\int_X (4\mu_x q_x - 4\mu_y q_y) dxdy = 4Re\left(\int_X q\mu\right)~. $$
\end{proof}

\bibliographystyle{plain}
\bibliography{/home/schlenker/Dropbox/papiers/outils/biblio}

\end{document}